\newcommand{\hl}[1]{\colorbox{yellow}{#1}}
\newtheorem{theorem}{Theorem}
\newtheorem*{remark*}{Remark}
\newtheorem{corollary}{Corollary}
\newtheorem{definition}{Definition}
\newtheorem{lemma}{Lemma}
\newtheorem{example}{Example}
\DeclareMathOperator*{\ext}{ext}
\title{Properties of Hamiltonian Variational Integrators}
\author{Jeremy M. Schmitt}
\author{Melvin Leok}
\address{Department of Mathematics, University of California, San Diego, 9500 Gilman Drive \#0112, La Jolla, CA  92093-0112,
USA.}
\email{\texttt{j2schmit@ucsd.edu}, \texttt{mleok@ucsd.edu}}
\begin{document}

\begin{abstract}
Discrete Hamiltonian variational integrators are derived from Type II and Type III generating functions for symplectic maps, and in this paper we establish a variational error analysis result that relates the order of accuracy of the associated numerical methods with the extent to which these generating functions approximate the exact discrete Hamiltonians. We also introduce the notion of an adjoint discrete Hamiltonian, and relate it to the adjoint of the associated symplectic integrator. We show that when constructing discrete Lagrangians and discrete Hamiltonians using the Taylor variational integrator approach, the same underlying one-step method and quadrature rule does not necessarily lead to the same symplectic integrator, and the same observation holds when developing variational integrators based on averaging techniques. Numerical experiments also indicate that the resonance behavior of variational integrators also depend on the type of generating functions used, and we relate this resonance behavior to the ill-posedness of the boundary-value problems used to define the exact discrete Lagrangian and exact discrete Hamiltonian.
\end{abstract}
	
\maketitle

\section{Introduction}
Geometric numerical integration is a field of numerical analysis that develops numerical methods with the goal of preserving geometric properties of dynamical systems (see \cite{HaLuWa2006}). Variational integrators are geometric numerical integrators derived from discretizing Hamilton's principle from classical mechanics (see \cite{MaWe2001}). They have many desirable properties such as symplecticty, momentum-preservation, and near-energy preservation, which results in excellent long-term stability. While the Lagrangian formulation of variational integrators has been thoroughly investigated (see \cite{MaWe2001,LeSh2011_sbvi,LeMaOr2008,LeDiMe2007,LeMaOrWe2003,MaPaSh1998}), only recently has the Hamiltonian formulation of variational integrators been established (see \cite{LaWe2006,LeZh2011}). 
\\

In this paper we will continue the investigation of Hamiltonian variational integrators, and establish theorems on error analysis, symmetry of the method, and provide numerical experiments to elucidate the relative numerical advantages and disadvantages of the Lagrangian and Hamiltonian formulations. In particular, evidence is presented to show that for oscillatory problems the discrete Lagrangian and discrete Hamiltonian variational integrators have differing resonance and conditioning properties. In addition, it is shown that some approximation methods will only yield a symmetric method when derived from a specific type of generating function. The upshot is that the numerical properties of a variational integrator are determined both by the approximation scheme used to construct it and by the type of the generating function being approximated. 

\subsection{Discrete Mechanics}
Lagrangian variational integrators are based on a discrete analogue of Hamilton's principle, and Hamiltonian variational integrators are based on a discrete analogue of Hamilton's phase space variational principle. The fundamental objects in the discretization are generating functions of symplectic maps, and in the Hamiltonian case, they are obtained by approximating the exact Type II generating function associated with a Hamiltonian flow, which we refer to as the exact discrete right Hamiltonian,
\begin{align}
H_d^{+,E}(q_0,p_1)& =  \ext_{\substack{(q, p) \in
C^2([0, T],T^*Q)\\q(0)=q_0, p(T)=p_1}} \left(p_1 q_1 - \int_0^T \left[ p \dot{q}-H(q, p) \right]
dt\right).
\end{align}
This can be viewed as the solution at time $T$ of the Type II Hamilton--Jacobi equation,
\begin{align} \label{type2hj}
\frac{\partial S_2( q_0, p, t)}{\partial t} =H\left(\frac{\partial S_2}{\partial p},p\right),
\end{align}
which more generally describes the Type II generating function which generates the time-$t$ Hamiltonian flow map,
\begin{align}\label{type2sol}
S_2(q_0, p, t)=\ext_{\substack{(q, p) \in
C^2([0, t],T^*Q)\\q(0)=q_0, p(t)=p}}\left (p(t)  q(t) - \int_0^t \left[ p(s) \dot{q}(s)-H(q(s), p(s)) \right] ds\right).
\end{align}
Similarly, the exact discrete left Hamiltonian is given by,
\begin{align}
H_d^{-,E}(p_0,q_1)& =  \ext_{\substack{(q, p) \in
C^2([0, T],T^*Q)\\q(0)=q_0, p(T)=p_1}} \left(-p_0 q_0 - \int_0^T \left[ p \dot{q}-H(q, p) \right]
dt\right).
\end{align}
and it can be viewed as a solution at time $T$ of the Type III Hamilton--Jacobi equation,
\begin{align} \label{type3hj}
\frac{\partial S_3( p_0, q, t)}{\partial t} =H\left(q,-\frac{\partial S_3}{\partial q}\right).
\end{align}
Given discrete Hamiltonians, $H_d^\pm(q_k,p_{k+1})$, the discrete Hamilton's equations are given by,
\begin{align}
q_{k+1}&=D_2H_d^+(q_k, p_{k+1}),\\
p_k&=D_1 H_d^+(q_k, p_{k+1}),\\
\intertext{and,}
q_k&=-D_1 H_d^-(p_k, q_{k+1}),\\
p_{k+1}&=-D_2 H_d^-(p_k, q_{k+1}).
\end{align}
These can also be expressed in terms of the discrete Legendre transformations, $\mathbb{F}^\pm H_d^+:(q_k,p_{k+1})\rightarrow T^*Q$,
\begin{align}
\mathbb{F}^+H_d^+(q_k,p_{k+1})&=(D_2 H_d^+ (q_k, p_{k+1}),p_{k+1}),\\
\mathbb{F}^-H_d^+(q_k,p_{k+1})&=(q_k, D_1H_d^+ (q_k, p_{k+1})),
\end{align}
and $\mathbb{F}^\pm H_d^-:(p_k,q_{k+1})\rightarrow T^*Q$,
\begin{align}
\mathbb{F}^+H_d^-(p_k,q_{k+1})&=(q_{k+1},-D_2 H_d^- (p_k, q_{k+1})),\\
\mathbb{F}^-H_d^-(p_k,q_{k+1})&=(-D_1H_d^- (p_k, q_{k+1}),p_k).
\end{align}
We observe that the Hamiltonian maps $\tilde{F}_{H_d^\pm}:(q_k,p_k) \mapsto (q_{k+1},p_{k+1})$ can be expressed as
\begin{equation}
\tilde{F}_{H^\pm_d}=\mathbb{F}^+H^\pm_d \circ (\mathbb{F}^-H^\pm_d)^{-1}.
\end{equation}

\section{Error Analysis and Symmetric Methods}
\subsection{Error Analysis}
Variational integrators are able to benefit from and adopt many traditional techniques and methods of numerical analysis (see \cite{LeSh2011_sbvi}). This can be largely attributed to the following theorem from \cite{MaWe2001}.
\begin{theorem}[Theorem 2.3.1, \citet{MaWe2001}]
If a discrete Lagrangian, $L_d:Q \times Q \rightarrow \mathbb{R}$, approximates the exact discrete Lagrangian, $L_d^E:Q \times Q \rightarrow \mathbb{R}$ to order $r$, i.e.,
$$L_d(q_0,q_1;h)=L_d^E(q_0,q_1;h) + \mathcal{O}(h^{r+1}),$$
then the discrete Hamiltonian map, $\tilde{F}_{L_d}:(q_k,p_k) \mapsto (q_{k+1},p_{k+1})$, viewed as a one-step method, is order $r$ accurate.
\end{theorem}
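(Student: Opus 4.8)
The plan is to connect the order of approximation of the discrete Lagrangian to the order of approximation of the associated discrete Legendre transforms, and then to propagate this error estimate through one step of the discrete Hamiltonian map. Recall that the discrete Lagrangian determines the one-step method $\tilde F_{L_d}$ through the discrete Legendre transforms, via $\tilde F_{L_d}=\mathbb F^+L_d\circ(\mathbb F^-L_d)^{-1}$, and that the exact discrete Lagrangian $L_d^E$ generates exactly the time-$h$ Hamiltonian flow $F_h$. So the natural strategy is: if $L_d$ and $L_d^E$ agree to $\mathcal O(h^{r+1})$, show that their first partial derivatives $D_1L_d,D_2L_d$ agree with $D_1L_d^E,D_2L_d^E$ to $\mathcal O(h^{r+1})$ as well, and hence the discrete Legendre transforms agree to that order. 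Then the one-step maps they generate differ by $\mathcal O(h^{r+1})$, which is precisely the statement that $\tilde F_{L_d}$ is order $r$ accurate (an order-$r$ method has local truncation error $\mathcal O(h^{r+1})$).

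The key steps, in order, would be: first, write the exact flow $F_h(q_0,p_0)=(q_1,p_1)$ in terms of the exact discrete Lagrangian using the discrete Euler--Lagrange equations $p_0=-D_1L_d^E(q_0,q_1)$ and $p_1=D_2L_d^E(q_0,q_1)$. Second, define the numerical step $\tilde F_{L_d}(q_0,p_0)=(\tilde q_1,\tilde p_1)$ by the analogous relations with $L_d$ in place of $L_d^E$. Third, quantify how the $\mathcal O(h^{r+1})$ agreement of $L_d$ with $L_d^E$ passes to the derivatives: this is the delicate point, since differentiating an $\mathcal O(h^{r+1})$ remainder does not automatically preserve the power of $h$. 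Here one must use that the remainder is a smooth function of $(q_0,q_1)$ whose dependence on these arguments is controlled uniformly, so that differentiating with respect to the configuration variables (not with respect to $h$) keeps the $\mathcal O(h^{r+1})$ estimate. Fourth, solve the implicit relation for $\tilde q_1$ given $(q_0,p_0)$ and compare with the exact $q_1$; invoking the implicit function theorem and the nondegeneracy of the Legendre transform, an $\mathcal O(h^{r+1})$ perturbation of the defining equations yields an $\mathcal O(h^{r+1})$ perturbation of the solution, so $|\tilde q_1-q_1|=\mathcal O(h^{r+1})$ and likewise $|\tilde p_1-p_1|=\mathcal O(h^{r+1})$.

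The main obstacle I anticipate is step three, the passage from closeness of the discrete Lagrangians to closeness of their derivatives. The hypothesis is stated as a pointwise (or uniform) $\mathcal O(h^{r+1})$ bound on $L_d-L_d^E$, but the discrete Euler--Lagrange equations involve the $q$-derivatives of these functions, so I genuinely need control of $D_1(L_d-L_d^E)$ and $D_2(L_d-L_d^E)$, not merely of the difference itself. I expect the resolution to require interpreting the order hypothesis in the stronger sense that the $\mathcal O(h^{r+1})$ remainder and a fixed number of its spatial derivatives are uniformly bounded on compact sets; under that reading, standard estimates let me differentiate the remainder term and retain the $h^{r+1}$ factor. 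Once that technical point is secured, the remainder of the argument is a routine application of the implicit function theorem together with the nondegeneracy built into the definition of the discrete Legendre transforms, and the conclusion that $\tilde F_{L_d}$ has local error $\mathcal O(h^{r+1})$, i.e.\ is order $r$ accurate, follows directly.
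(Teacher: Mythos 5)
Your proposal is correct and follows essentially the same route as the paper: the paper likewise interprets the order hypothesis as $L_d=L_d^E+h^{r+1}e$ with $e$ smooth and bounded on compact sets (your strengthened reading of the hypothesis), differentiates to obtain $\mathcal{O}(h^{r+1})$-closeness of the discrete Legendre transforms, and then propagates this through $\tilde{F}_{L_d}=\mathbb{F}^+L_d\circ(\mathbb{F}^-L_d)^{-1}$ via Lemma~\ref{estimates} on compositions and inverses of $\mathcal{O}(h^{r+1})$-close maps, which plays exactly the role of your implicit-function-theorem step. Your outline contains no gap beyond the uniformity-in-$h$ of the Lipschitz constants, which the paper's own proof tolerates at the same level of rigor.
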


Thus, in order to generate a variational integrator of a particular order, one can leverage techniques from numerical analysis with the goal of approximating the exact discrete Lagrangian, then the associated discrete Hamiltonian map yields the variational integrator. We first present the corresponding theorem for discrete Hamiltonian variational integrators, which draws much of its inspiration from the theorem and proof of the above result as detailed in \cite{MaWe2001}.

\begin{theorem}
If a discrete right Hamiltonian, $H^+_d:T^*Q \rightarrow \mathbb{R}$, approximates the exact discrete Hamiltonian, $H_d^{+,E}:T^*Q \rightarrow \mathbb{R}$ to order $r$, i.e.,
$$H^+_d(q_0,p_1;h)=H_d^{+,E}(q_0,p_1;h) + \mathcal{O}(h^{r+1}),$$
and the Hamiltonian is continuously differentiable, then the discrete map, $\tilde{F}^h_{H_d^+}:(q_k,p_k) \mapsto (q_{k+1},p_{k+1})$, viewed as a one-step method, is order $r$ accurate.
\end{theorem}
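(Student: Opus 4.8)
\emph{Proof proposal.} The plan is to mirror the proof of the preceding theorem (the Lagrangian result of \citet{MaWe2001}) by reducing the order statement to an estimate on how closely the discrete Hamiltonian map generated by $H_d^+$ approximates the one generated by the exact discrete Hamiltonian $H_d^{+,E}$. The guiding observation is that a one-step method is order $r$ accurate precisely when its local truncation error is $\mathcal{O}(h^{r+1})$, so it suffices to show that $\tilde{F}^h_{H_d^+}(q_0,p_0) - \tilde{F}^h_{H_d^{+,E}}(q_0,p_0) = \mathcal{O}(h^{r+1})$ uniformly on compact subsets of $T^*Q$. The first step is therefore to establish that $\tilde{F}^h_{H_d^{+,E}}$ is exactly the time-$h$ Hamiltonian flow: this follows from the Type II Hamilton--Jacobi theory recalled in \eqref{type2hj}--\eqref{type2sol}, since $H_d^{+,E}$ is by construction the Type II generating function of the exact flow, so that the discrete right Hamilton's equations reproduce the exact flow map. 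With this identification the problem becomes one of propagating the generating-function error to the map.

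The second step is to express both maps through the discrete Legendre transforms, using $\tilde{F}_{H_d^+} = \mathbb{F}^+H_d^+ \circ (\mathbb{F}^-H_d^+)^{-1}$ and the analogous factorization for $H_d^{+,E}$. Since $\mathbb{F}^+H_d^+(q_0,p_1) = (D_2H_d^+(q_0,p_1),p_1)$ and $\mathbb{F}^-H_d^+(q_0,p_1) = (q_0, D_1H_d^+(q_0,p_1))$, the error in each Legendre transform is governed entirely by the errors in the partial derivatives $D_1(H_d^+ - H_d^{+,E})$ and $D_2(H_d^+ - H_d^{+,E})$. Writing the one-step map concretely, given $(q_0,p_0)$ one solves the implicit equation $p_0 = D_1H_d^+(q_0,p_1)$ for $p_1$ and then sets $q_1 = D_2H_d^+(q_0,p_1)$; comparing with the exact solve $p_0 = D_1H_d^{+,E}(q_0,p_1^E)$, a Taylor expansion in the second argument reduces the difference $p_1 - p_1^E$ to $-[D_2 D_1 H_d^{+,E}]^{-1} D_1(H_d^+ - H_d^{+,E})$ to leading order.

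The third step is to control the implicit solve and the composition. Here the continuous differentiability of $H$ enters: it guarantees that $H_d^{+,E}$ is nondegenerate, so that the mixed Hessian $D_2 D_1 H_d^{+,E}$ is invertible with a bounded inverse for sufficiently small $h$, and hence that $\mathbb{F}^-H_d^{+,E}$ is a local diffeomorphism whose inverse has derivative bounded uniformly on compacta. By the implicit function theorem the solution $p_1$ then depends Lipschitz-continuously on the data, so an $\mathcal{O}(h^{r+1})$ perturbation of $D_1H_d^+$ produces an $\mathcal{O}(h^{r+1})$ perturbation of $p_1$, and feeding this into $q_1 = D_2H_d^+(q_0,p_1)$ yields $q_1 - q_1^E = \mathcal{O}(h^{r+1})$ as well. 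Since the composition of maps that are $\mathcal{O}(h^{r+1})$-close, with uniformly bounded Lipschitz constants, remains $\mathcal{O}(h^{r+1})$-close, the full map error is $\mathcal{O}(h^{r+1})$ and the method is order $r$.

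The main obstacle is the step from the hypothesis $H_d^+ = H_d^{+,E} + \mathcal{O}(h^{r+1})$, which is an estimate on the functions themselves, to the required estimates $D_iH_d^+ = D_iH_d^{+,E} + \mathcal{O}(h^{r+1})$ on their partial derivatives, since a pointwise order-$(r+1)$ bound on a function need not control its derivatives. I expect to resolve this exactly as in the Lagrangian case: interpret the approximation as holding uniformly on compact sets together with its derivatives in $(q_0,p_1)$ --- equivalently, that the error, as a smooth function of $(q_0,p_1,h)$ vanishing to order $r+1$ in $h$, has spatial partial derivatives inheriting the same order --- and verify that the quadrature-based constructions of $H_d^+$ produce errors of this regularity. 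Establishing this derivative-level control, together with the uniform nondegeneracy of $H_d^{+,E}$ needed for the implicit solve, will be the technical heart of the argument.
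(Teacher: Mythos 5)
Your proposal takes essentially the same route as the paper's proof: interpret the hypothesis as $H^+_d = H_d^{+,E} + h^{r+1}e$ with $e$ bounded and continuously differentiable, differentiate to get $\mathcal{O}(h^{r+1})$-closeness of the discrete Legendre transforms, and propagate this through the factorization $\tilde{F}_{H^+_d}=\mathbb{F}^+H^+_d \circ (\mathbb{F}^-H^+_d)^{-1}$ via Lipschitz-on-compacta estimates for compositions and inverses --- the paper packages these estimates in Lemma~\ref{estimates}, where you instead invoke the implicit function theorem with the invertible mixed Hessian $D_2D_1H_d^{+,E}$, which amounts to the same bound. The ``main obstacle'' you flag, upgrading the function-level estimate to derivative-level estimates, is resolved in the paper exactly as you anticipate, by reading the hypothesis as saying the error is $h^{r+1}$ times a smooth bounded function of $(q_0,p_1,h)$.
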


We will need the following lemma.
\begin{lemma}\label{estimates}
Let $f_1,g_1,e_1,f_2,g_2,e_2 \in C^r$ be such that
$$f_1(x,h)=g_1(x,h)+h^{r+1}e_1(x,h),$$
$$f_2(x,h)=g_2(x,h)+h^{r+1}e_2(x,h).$$
Then, there exists functions $e_{12}$ and $\bar{e}_1$ bounded on compact sets such that
$$f_2(f_1(x,h),h)=g_2(g_1(x,h),h)+h^{r+1}e_{12}(g_1(x,h),h),$$
$$f_1^{-1}(y(h))=g_1^{-1}(y(h))+h^{r+1}\bar{e}_1(y(h)).$$
\end{lemma}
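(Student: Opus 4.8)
The plan is to treat the two assertions separately, in each case converting the given $\mathcal{O}(h^{r+1})$ perturbation appearing in the \emph{hypotheses} into an $\mathcal{O}(h^{r+1})$ perturbation of the \emph{conclusion} by means of a first-order Taylor expansion, equivalently the fundamental theorem of calculus in the relevant argument. Since all six functions are $C^r$ with $r \ge 1$, their first partial derivatives are continuous and hence bounded on compact sets, which is all the regularity the argument consumes.

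For the composition estimate, I would first apply the second hypothesis with the point $f_1(x,h)$ substituted for $x$, giving
$$f_2(f_1(x,h),h) = g_2(f_1(x,h),h) + h^{r+1} e_2(f_1(x,h),h).$$
Then I would expand $g_2$ about $g_1(x,h)$ in its first slot, using $f_1(x,h) = g_1(x,h) + h^{r+1}e_1(x,h)$ together with
$$g_2(f_1(x,h),h) = g_2(g_1(x,h),h) + h^{r+1} e_1(x,h)\int_0^1 D_1 g_2\bigl(g_1(x,h) + s\,h^{r+1}e_1(x,h),\,h\bigr)\,ds.$$
Adding the two contributions collects the remainder into a single coefficient $\Phi(x,h)$ that is a sum of products of the continuous functions $e_1,e_2,D_1 g_2$ evaluated along bounded arguments, hence bounded on compact sets. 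To obtain the stated dependence on $g_1(x,h)$ rather than on $x$, I would invoke the invertibility of $g_1(\cdot,h)$ (the setting being that of flow maps and one-step methods, which are diffeomorphisms for $h$ small) and define $e_{12}(w,h) := \Phi(g_1^{-1}(w,h),h)$; since $g_1^{-1}$ carries compact sets to compact sets, $e_{12}$ inherits boundedness on compacta.

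For the inverse estimate, set $a = f_1^{-1}(y(h))$ and $b = g_1^{-1}(y(h))$, so that $g_1(a,h) + h^{r+1}e_1(a,h) = f_1(a,h) = y(h) = g_1(b,h)$, whence $g_1(a,h)-g_1(b,h) = -h^{r+1} e_1(a,h)$. Factoring the left-hand side by the fundamental theorem of calculus produces $M(h)\,(a-b)$ with $M(h) = \int_0^1 D_1 g_1(b + s(a-b),h)\,ds$; because $D_1 g_1(b,h)$ is invertible and $M(h) \to D_1 g_1(b,h)$ as $a \to b$, the averaged Jacobian $M(h)$ is invertible for $h$ small, and solving yields $a - b = -h^{r+1} M(h)^{-1} e_1(a,h)$. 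Defining $\bar e_1(y(h))$ to be this coefficient and noting that $a = f_1^{-1}(y(h))$ remains in a compact set while $M(h)^{-1}$ and $e_1$ stay bounded there establishes the claim.

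The main obstacle is bookkeeping rather than depth: one must check that the remainder coefficient in the composition estimate genuinely depends on $x$ only through $g_1(x,h)$, which is precisely why the invertibility of $g_1(\cdot,h)$ is needed to legitimize the notation $e_{12}(g_1(x,h),h)$; and in the inverse estimate one must verify that $M(h)$ is invertible uniformly for small $h$ and that the preimage $a$ does not escape as $h \to 0$, both of which follow from the diffeomorphism property of the maps involved.
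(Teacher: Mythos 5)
Your proof is correct, and it runs on the same engine as the paper's own proof---first-order perturbation estimates valid on compact sets---but the implementation differs enough to be worth comparing. The paper handles both claims with bare Lipschitz bounds: for the composition it asserts $\|g_2(g_1(x,h)+h^{r+1}e_1(x,h),h)-g_2(g_1(x,h),h)\| \le Ch^{r+1}$ from smoothness plus compactness and defines $\tilde{e}_1$ pointwise from the constant; for the inverse it applies the same Lipschitz reasoning directly to $g_1^{-1}$. You instead make the estimates quantitative via the integral form of Taylor's theorem: an averaged-Jacobian formula for the composition remainder, and for the inverse a factorization $g_1(a,h)-g_1(b,h)=M(h)(a-b)$ followed by inversion of the averaged Jacobian $M(h)$, which is the forward-map counterpart of the paper's use of the Lipschitz constant of $g_1^{-1}$. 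Your route requires the extra check (which you flag) that $M(h)$ is uniformly invertible for small $h$; note that this rests on $a \to b$ as $h \to 0$, which itself follows from continuity of $g_1^{-1}$ on compacta, so you have not actually dispensed with regularity of the inverse, only repackaged it. What your version buys is rigor at the one point where the paper is genuinely loose: the conclusion writes the remainder as $e_{12}(g_1(x,h),h)$, a function of $g_1(x,h)$ rather than of $x$, and the paper's construction (``for each choice of $(x,h)$, equality holds for a particular choice of constant'') only defines it as a function of $(x,h)$ without justifying that dependence; your definition $e_{12}(w,h):=\Phi(g_1^{-1}(w,h),h)$, legitimized by invertibility of $g_1(\cdot,h)$ (an assumption implicit in the lemma anyway, since its second conclusion presupposes that $f_1^{-1}$ and $g_1^{-1}$ exist), supplies exactly the step the paper skips.
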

\begin{proof}
\begin{align*}
f_2(f_1(x,h),h)&=f_2(g_1(x,h)+h^{r+1}e_1(x,h),h)\\&=g_2(g_1(x,h)+h^{r+1}e_1(x,h),h)+h^{r+1}e_2(g_1(x,h)+h^{r+1}e_1(x,h),h)\\&=g_2(g_1(x,h),h)+h^{r+1}\tilde{e}_1(g_1(x,h),h)+h^{r+1}e_2(g_1(x,h)+h^{r+1}e_1(x,h),h),
\end{align*}
where $\tilde{e}_1$ is bounded on compact set. This last line comes from combining compactness of the set with the smoothness of the functions to obtain a Lipschitz property of the form, 
$$\|g_2(g_1(x,h)+h^{r+1}e_1(x,h),h)-g_2(g_1(x,h),h)\| \leq Ch^{r+1}.$$
For each choice of $(x,h)$, equality holds for a particular choice of constant, which defines $\tilde{e}_1$ and establishes its smoothness as a function. Adding $e_2$ to $\tilde{e}_1$ we obtain a function $e_{12}$, which is also bounded on compact sets such that,
$$ f_2(f_1(x,h),h)=g_2(g_1(x,h),h)+h^{r+1}e_{12}(g_1(x,h),h).$$
Let $y=f_1(x,h)$, and note that by definition,
$$f_1^{-1}(f_1(x,h))=g_1^{-1}(g_1(x,h)).$$
Since $g_1^{-1}(y)=g_1^{-1}(g_1(x,h)+h^{r+1}e_1(x,h))$, then
$$\|g_1^{-1}(y)-f_1^{-1}(y)\|=\|g_1^{-1}(y)-g_1^{-1}(g_1(x,h))\| \leq \bar{C} h^{r+1}.$$
From this, it follows that there exists a function $\bar{e}_1$ bounded on compact sets such that,
$$f_1^{-1}(y)=g_1^{-1}(y)+h^{r+1}\bar{e}_1(y).$$
\end{proof}

Now we are ready for the proof of the theorem.
\begin{proof}
By assumption there is some bounded continuously differentiable function $e$ such that,
$$H^+_d(q(0),p(h),h)=H^{+,E}_d(q(0),p(h),h)+h^{r+1}e(q(0),p(h),h).$$
Differentiating yields,
$$D_1H^+_d(q(0),p(h),h)=D_1H^{+,E}_d(q(0),p(h),h)+h^{r+1}D_1e(q(0),p(h),h),$$
where $\|D_1e(q(0),p(h),h)\| \leq \tilde{C}$. This implies,
$$\| \mathbb{F}^-H_d^+(q(0),p(h),h) - \mathbb{F}^-H^{+,E}_d(q(0),p(h),h)\| \leq \tilde{C}h^{r+1}.$$
Now combining this with the fact that $\tilde{F}_{H^+_d}=\mathbb{F}^+H^+_d \circ (\mathbb{F}^-H^+_d)^{-1}$ and applying Lemma \ref{estimates}, we have,
$$\tilde{F}^h_{H^+_d}=\tilde{F}^h_{H_d^{+,E}} + \mathcal{O}(h^{r+1}).$$
\end{proof}

Determining the order of a variational integrator is greatly simplified via the above theorems, which relate the order of the integrator to the order to which the associated discrete Lagrangian or discrete right Hamiltonian approximates the corresponding exact generating function. Similarly, it was shown in \cite{MaWe2001} that one can determine whether or not the variational integrator is a symmetric method by examining the corresponding discrete Lagrangian. We would like to extend this result to the case of discrete Hamiltonians.

\subsection{Symmetric Methods}
\begin{definition}[see Chapters II.3 and V of \cite{HaLuWa2006}] A numerical one-step method $\Phi_h$ is called \textbf{symmetric} or \textbf{time-reversible}, if it satisfies
 $$\Phi_h \circ \Phi_{-h} = id$$
 or equivalently
 $$\Phi_h=\Phi^{-1}_{-h}.$$
 The \textbf{adjoint} of a numerical one-step method, denoted $\Phi_h^*$, is defined as
 $$\Phi_h^*=\Phi^{-1}_{-h}.$$
\end{definition}
A numerical one-step method is a symmetric method if it is self-adjoint(i.e. $\Phi_h=\Phi_h^*$). The adjoint of a discrete Lagrangian, $L_d^*$, is defined as
$$L_d^*(q_0,q_1,h)=-L_d(q_1,q_0,-h).$$
The discrete Lagrangian is called self-adjoint if $L_d^*(q_0,q_1,h)=L_d(q_0,q_1,h)$. The following theorem from \cite{MaWe2001} relates the self-adjointness of the discrete Lagrangian with the self-adjointness of the corresponding variational integrator.
\begin{theorem}[Theorem 2.4.1 of \cite{MaWe2001}]
The discrete Lagrangian (or an equivalent discrete Lagrangian), $L_d$, is self-adjoint if and only if the method associated to the corresponding discrete Hamiltonian map is self-adjoint (i.e. symmetric).
\end{theorem}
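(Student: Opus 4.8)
The plan is to reduce the entire biconditional to a single structural identity, namely that the adjoint of the variational integrator generated by $L_d$ is itself the variational integrator generated by the adjoint discrete Lagrangian,
\begin{equation}
\bigl(\tilde{F}_{L_d}^{\,h}\bigr)^* = \tilde{F}_{L_d^*}^{\,h}.
\end{equation}
Granting this, the forward direction is immediate: if $L_d=L_d^*$ (or if some equivalent discrete Lagrangian is self-adjoint, in which case it generates the same map), then $\bigl(\tilde{F}_{L_d}^{\,h}\bigr)^* = \tilde{F}_{L_d^*}^{\,h} = \tilde{F}_{L_d}^{\,h}$, so the method is self-adjoint, i.e.\ symmetric. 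The content of the theorem is therefore concentrated in establishing the displayed identity and then handling the converse.

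To prove the identity I would write the discrete Hamiltonian map in implicit form via the discrete Lagrangian Legendre transforms $\mathbb{F}^-L_d(q_0,q_1)=(q_0,-D_1L_d(q_0,q_1,h))$ and $\mathbb{F}^+L_d(q_0,q_1)=(q_1,D_2L_d(q_0,q_1,h))$, so that $\Phi_h:=\tilde{F}_{L_d}^{\,h}=\mathbb{F}^+L_d\circ(\mathbb{F}^-L_d)^{-1}$ sends $(q_0,p_0)\mapsto(q_1,p_1)$ through the relations $p_0=-D_1L_d(q_0,q_1,h)$ and $p_1=D_2L_d(q_0,q_1,h)$. I would then unwind $\Phi_h^*=\Phi_{-h}^{-1}$: the condition $\Phi_h^*(q_0,p_0)=(q_1,p_1)$ is equivalent to $\Phi_{-h}(q_1,p_1)=(q_0,p_0)$, which with the roles of the two slots interchanged reads $p_1=-D_1L_d(q_1,q_0,-h)$ and $p_0=D_2L_d(q_1,q_0,-h)$. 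Differentiating the defining relation $L_d^*(q_0,q_1,h)=-L_d(q_1,q_0,-h)$ gives $D_1L_d^*(q_0,q_1,h)=-D_2L_d(q_1,q_0,-h)$ and $D_2L_d^*(q_0,q_1,h)=-D_1L_d(q_1,q_0,-h)$, and substituting these turns the two relations into $p_0=-D_1L_d^*(q_0,q_1,h)$ and $p_1=D_2L_d^*(q_0,q_1,h)$, which are exactly the defining relations of $\tilde{F}_{L_d^*}^{\,h}$. This computation is the main obstacle, not because it is deep but because it is easy to get wrong: one must simultaneously track the swap of the two position arguments, the reflection $h\mapsto-h$, and the opposite sign conventions in $\mathbb{F}^+$ versus $\mathbb{F}^-$, and the nondegeneracy making each Legendre transform a local diffeomorphism is what legitimizes inverting $\Phi_{-h}$ in the first place.

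For the converse I would assume the method is symmetric, so $\tilde{F}_{L_d}^{\,h}=\bigl(\tilde{F}_{L_d}^{\,h}\bigr)^*=\tilde{F}_{L_d^*}^{\,h}$, meaning $L_d$ and $L_d^*$ generate the same discrete Hamiltonian map and are hence equivalent. Here I would invoke that a Type~I generating function of a given symplectic map is unique up to an additive constant: since both $L_d$ and $L_d^*$ reproduce the same $D_1$ and $D_2$ along the graph of the map, their total derivatives agree and $L_d^*=L_d+c$ on a connected domain. Setting $\bar{L}_d:=L_d+\tfrac{c}{2}$ then yields a discrete Lagrangian that generates the same map (adding a constant leaves $D_1,D_2$ and hence the Legendre transforms unchanged) and satisfies $\bar{L}_d^*=\bar{L}_d$, exhibiting a self-adjoint discrete Lagrangian equivalent to $L_d$. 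This step is where the "or an equivalent discrete Lagrangian" qualifier is essential and is the second delicate point: the argument rests on the equivalence of discrete Lagrangians being precisely equality of the generated map, together with the uniqueness-up-to-constant statement, both of which should be stated carefully (with the requisite regularity and connectedness hypotheses) rather than taken for granted.
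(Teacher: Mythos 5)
Your proposal is correct and takes essentially the same route as the paper: the paper states this theorem without proof (citing Marsden and West), but its own proof of the Hamiltonian analogue --- the corollary on self-adjoint discrete right Hamiltonians --- rests on exactly the structural identity you establish, namely that the adjoint of the generated map is the map generated by the adjoint generating function, $(\tilde{F}^h_{L_d})^* = \tilde{F}^h_{L_d^*}$, which is the Type I counterpart of the identity $F^h_{(H_d^+)^*} = (F^h_{H_d^+})^*$ used there, and your slot-by-slot verification of the defining relations is accurate. One small refinement in your converse: the additive constant may depend on the step size, $c = c(h)$, so to legitimize the halving trick you should note that applying the involution $L_d^{**} = L_d$ to $L_d^* = L_d + c(h)$ forces $c(h) = c(-h)$, and this evenness is precisely what makes $\bar{L}_d = L_d + \tfrac{1}{2}c(h)$ self-adjoint, since $\bar{L}_d^*(q_0,q_1,h) = L_d^*(q_0,q_1,h) - \tfrac{1}{2}c(-h)$.
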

In many cases it is easier to check if the discrete Lagrangian is self-adjoint, rather than checking the variational integrator itself. We seek a definition for the adjoint of a discrete right Hamiltonian.

 The adjoint of a one-step method $(q_1,p_1)=\Phi_h(q_0,p_0)$ can be obtained by reversing the direction of time, and reversing the roles of the initial data and terminal solution, i.e., $(q_0,p_0)=\Phi^*_{-h}(q_1,p_1)$. This corresponds to swapping out $(q_0,p_0,q_1,p_1,h)$ for $(q_1,p_1,q_0,p_0,-h)$. This motivates the definition of the adjoint of a Type II generating function as a Type III generating function and vice versa. In particular, given a Type II discrete Hamiltonian $H_d^+$, we seek a definition for the Type III adjoint $(H_d^+)^*$ that will satisfy $F^h_{(H_d^+)^*} = (F^h_{H_d^+})^*$. Let $F^h_{(H_d^+)^*}(q_0,p_0)=(q_1,p_1)$. Then, we want
 \begin{align*}
 (q_1,p_1) &=F^h_{(H_d^+)^*}(q_0,p_0)\\
 &=(F^h_{H_d^+})^*(q_0,p_0) \\
 &=(F^{-h}_{H_d^+})^{-1}(q_0,p_0).
 \end{align*}
 This implies $F^{-h}_{H_d^+}(q_1,p_1) = (q_0,p_0)$, which together with  $F^h_{(H_d^+)^*}(q_0,p_0)=(q_1,p_1)$ yield the respective sets of equations,
\begin{align*}
p_1 &= D_1H_d(q_1,p_0;-h), \\
q_0 &= D_2H_d(q_1,p_o;-h), 
\end{align*}
and
\begin{align*}
p_1 &= -D_2(H_d)^*(p_0,q_1;h), \\
q_0 &= -D_1(H_d)^*(p_0,q_1;h).
\end{align*}
Comparing these equations we see that setting $(H_d^+)^*(p_0,q_1;h) = -H_d^+(q_1,p_0;-h)$ satisfies $F^h_{(H_d^+)^*} = (F^h_{H_d^+})^*$. A similar calculation yields an analogous expression for the adjoint of a Type III generating function $H_d^-$.
\begin{definition}
Given a Type II/III generating function, $H_d^\pm$, define the \textbf{adjoint} as the Type III/II generating function, $(H_d^\pm)^*$, where $F_{(H_d^\pm)^*}^h(q_0,p_0) = (q_1,p_1)$, as
\begin{equation}
(H_d^+)^*(p_0,q_1;h) = -H_d^+(q_1,p_0;-h),
\end{equation}
\begin{equation}
(H_d^-)^*(q_0,p_1;h) = -H_d^-(p_1,q_0;-h).
\end{equation}
\end{definition}

\begin{example}
The symplectic Euler-A method for a Lagrangian of the form $L(q,\dot{q}) = \frac{1}{2}\dot{q}^TM\dot{q} - V(q)$ is given by,
\begin{align*}
p_1 &= p_0 - h\nabla V(q_0), \\
 q_1 &= q_0 + hM^{-1}p_1.
\end{align*}
The corresponding discrete right Hamiltonian is given by 
\begin{align*}
H_d^+(q_0,p_1,h) &= p_1^T(q_0 + hM^{-1}p_1) - h[p_1^TM^{-1}p_1 - H(q_0,p_1)], \\
&= p_1^Tq_0 +h H(q_0,p_1).
\end{align*}
The adjoint of this method is given by symplectic Euler-B,
\begin{align*}
q_1 = q_0 + hM^{-1}p_0, \\
p_1 = p_0 - h\nabla V(q_1).
\end{align*}
We now derive the corresponding adjoint of the discrete right Hamiltonian for symplectic Euler-A.
\begin{align*}
(H_d^+)^*(q_1,p_0;h) &= -H^+_d(p_0,q_1;-h) \\
&= -p_0^T(q_1 - hM^{-1}p_0) - h[p_0^TM^{-1}p_0 - H(q_1,p_0)] \\
&= -p_0^Tq_1 + h H(q_1,p_0).
\end{align*}
We can verify that this generates symplectic Euler-B by applying the discrete left Hamilton's equations,
\begin{align*}
q_0 &= -D_1(H^+_d)^*(p_0,q_1;h) \\
&= D_2H^+_d(q_1,p_0;-h) \\
&= q_1 - hM^{-1}p_0, \\
p_1 &= -D_2(H^+_d)^*(p_0,q_1;h) \\
&= D_1H^+_d(q_1,p_0;-h) \\
&= p_0 - h\nabla V(q_1).
\end{align*}
Solving the first equation for $q_1$ gives symplectic Euler-B, as expected.
\end{example}

\begin{theorem}
$(H_d^\pm)^{**}=H_d^\pm$.
\end{theorem}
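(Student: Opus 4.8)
The plan is to verify the identity by direct computation, applying the adjoint operation twice and tracking carefully how the two operations compose. The essential structural observation is that the adjoint exchanges Type II and Type III generating functions, so $(H_d^+)^*$ is a Type III object while $(H_d^+)^{**}$ is once again a Type II object with the same signature as $H_d^+$, and symmetrically starting from $H_d^-$. Thus the claim $(H_d^\pm)^{**}=H_d^\pm$ is at least type-consistent, and what remains is to check that the overall signs and the time reversals cancel.

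First I would set $G=(H_d^+)^*$, the Type III generating function given by the definition as $G(p_0,q_1;h)=-H_d^+(q_1,p_0;-h)$. To compute $(H_d^+)^{**}=G^*$, I would apply the Type III adjoint formula $G^*(q_0,p_1;h)=-G(p_1,q_0;-h)$. The only care needed is in evaluating $G$ at the arguments $(p_1,q_0;-h)$: substituting into the defining expression for $G$ puts $p_1$ in the momentum slot, $q_0$ in the position slot, and replaces $h$ by $-h$, which yields $G(p_1,q_0;-h)=-H_d^+(q_0,p_1;h)$, since the two reversals of the time step compose to the identity.

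Then I would combine these to obtain $G^*(q_0,p_1;h)=-\bigl(-H_d^+(q_0,p_1;h)\bigr)=H_d^+(q_0,p_1;h)$, which is exactly $H_d^+$ in its original signature. An identical computation with the roles of Type II and Type III generating functions interchanged, using the second defining equation for the adjoint, establishes $(H_d^-)^{**}=H_d^-$.

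The computation is routine, so the only real obstacle is bookkeeping: one must consistently track which slot holds a configuration variable and which holds a momentum as the type toggles between the two applications of the adjoint, and must not conflate the overall sign flip (which appears twice and cancels) with the internal time reversal $h\mapsto -h$ (which also appears twice and composes to the identity). Getting either of these pairings out of step would spoil the cancellation, so I would check the argument ordering against the two defining equations of the preceding definition at each stage.
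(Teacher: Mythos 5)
Your proposal is correct and is essentially identical to the paper's proof: both apply the two defining equations of the adjoint in succession, using that $(H_d^+)^*$ is a Type III generating function, so that the two sign flips and the two time reversals $h\mapsto -h$ cancel to give $(H_d^+)^{**}(q_0,p_1;h)=-(H_d^+)^*(p_1,q_0;-h)=H_d^+(q_0,p_1;h)$, with the symmetric computation for $H_d^-$. Your careful bookkeeping of which slot holds the position versus momentum variable is exactly the content of the paper's (terser) calculation.
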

\begin{proof}
We consider the case of the Type II generating function $H_d^+$. Let $F^h_{(H_d^+)^{**}}(q_0,p_0)=(q_1,p_1)$. Since $(H_d^+)^*$ is a Type III generating function, applying the definition of the adjoint twice gives
\begin{align*}
(H_d^+)^{**}(q_0,p_1;h) &= -(H_d^+)^*(p_1,q_0;-h) \\
&= H_d^+(q_0,p_1;h),
\end{align*}
and a similar calculation shows that this holds for the Type III generating function $H_d^-$ as well.
\end{proof}

Since the notion of the adjoint that we introduced converts a Type II to a Type III generating function, for a discrete Hamiltonian to be self-adjoint, we need to compare the adjoint to the Legendre transformation of the discrete Hamiltonian, which is given by,
\[H_d^-(p_k, q_{k+1})=-p_k q_k - p_{k+1} q_{k+1}+H_d^+(q_k, p_{k+1}),\]
where we view $p_{k+1}$ and $q_k$ as functions of $p_k$ and $q_{k+1}$. Then, the following calculation shows that these two generating functions generate the same symplectic map, i.e., $F_{H_d^-}=F_{H_d^+}$,
\begin{align*}
-D_1 H_d^- (p_k, q_{k+1})
&=q_k +p_k\frac{\partial q_k}{\partial p_k}+\frac{\partial p_{k+1}}{\partial p_k} q_{k+1}-D_1 H_d^+(q_k, p_{k+1})\frac{\partial q_k}{\partial p_k}-D_2 H_d^+(q_k, p_{k+1})\frac{\partial p_{k+1}}{\partial p_k}\\
&=q_k+\left(p_k-D_1 H_d^+(q_k, p_{k+1})\right)\frac{\partial q_k}{\partial p_k}+\left(q_{k+1}-D_2 H_d^+(q_k, p_{k+1}\right)\frac{\partial p_{k+1}}{\partial p_k},\\
-D_2 H_d^-(p_k, q_{k+1})&= p_k\frac{\partial q_k}{\partial q_{k+1}}+\frac{\partial p_{k+1}}{\partial q_{k+1}}q_{k+1}+p_{k+1}-D_1 H_d^+(q_k, p_{k+1})\frac{\partial q_k}{\partial q_{k+1}}-D_2 H_d^+(q_k, p_{k+1})\frac{\partial p_{k+1}}{\partial q_{k+1}}\\
&=p_{k+1}+\left(p_k-D_1 H_d^+(q_k, p_{k+1})\right)\frac{\partial q_k}{\partial q_{k+1}}+\left(q_{k+1}-D_2 H_d^+(q_k, p_{k+1})\right)\frac{\partial p_{k+1}}{\partial q_{k+1}}.
\end{align*}

\begin{definition}
A Type II/III generating function is \textbf{self-adjoint}, if it is equal (up to equivalency) to the Legendre transform of its adjoint.
\end{definition}
Note that this definition implies that a discrete right Hamiltonian is self-adjoint if its adjoint is equal (up to equivalency) to the associated discrete left Hamiltonian, i.e.,  $(H_d^+)^* = H_d^-$.
\begin{corollary}
Given a self-adjoint discrete right Hamiltonian, i.e., $H_d^-=(H_d^+)^*$, the method associated to the discrete right Hamiltonian map is self-adjoint. Likewise, if a method coming from a discrete right Hamiltonian map is self-adjoint, then the associated discrete right Hamiltonian is self-adjoint.
\end{corollary}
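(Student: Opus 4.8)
The plan is to prove this corollary as an ``if and only if'' statement, relying on two facts that are already available. First, by the construction of the adjoint discrete Hamiltonian, the adjoint generating function generates the adjoint method, i.e.
\[
F^h_{(H_d^+)^*} = (F^h_{H_d^+})^*.
\]
Second, the Legendre transform computation immediately preceding the statement shows that the associated discrete left Hamiltonian $H_d^-$ generates the \emph{same} symplectic map as $H_d^+$, that is $F^h_{H_d^-} = F^h_{H_d^+}$. These two identities are the only nontrivial ingredients; the rest of each direction is a chain of equalities together with the principle that equivalent generating functions generate the same discrete map.

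For the forward implication, I would assume $H_d^+$ is self-adjoint, which by the remark following the definition means $(H_d^+)^* = H_d^-$ up to equivalency. Chaining the two identities then gives
\[
(F^h_{H_d^+})^* = F^h_{(H_d^+)^*} = F^h_{H_d^-} = F^h_{H_d^+},
\]
where the first equality is the construction of the adjoint, the second uses equivalency, and the third is the Legendre transform identity. Hence $F^h_{H_d^+}$ is self-adjoint. This direction is essentially formal once the two identities are accepted.

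For the reverse implication, I would assume the method is self-adjoint, $(F^h_{H_d^+})^* = F^h_{H_d^+}$, and reverse the chain:
\[
F^h_{(H_d^+)^*} = (F^h_{H_d^+})^* = F^h_{H_d^+} = F^h_{H_d^-}.
\]
Thus $(H_d^+)^*$ and $H_d^-$ are two Type III generating functions that generate one and the same symplectic map, and I would conclude $(H_d^+)^* = H_d^-$ (up to equivalency), i.e. self-adjointness of $H_d^+$, from the uniqueness of a generating function of a fixed type up to equivalency.

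The main obstacle is precisely this last step of the reverse direction: passing from \emph{same generated map} back to \emph{equal up to equivalency}. I would need to make explicit the notion of equivalence for discrete Hamiltonians, analogous to the equivalence of discrete Lagrangians in \cite{MaWe2001}, and verify that two Type III generating functions inducing identical discrete Hamilton's equations differ only by such an equivalence. The forward direction and the algebraic bookkeeping are routine by comparison.
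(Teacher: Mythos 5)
Your proposal is correct and takes essentially the same route as the paper: both directions are exactly the chains of equalities $(F_{H_d^+})^*=F_{(H_d^+)^*}=F_{H_d^-}=F_{H_d^+}$ and its reversal, resting on the identity $F^h_{(H_d^+)^*}=(F^h_{H_d^+})^*$ from the adjoint construction and the Legendre-transform identity $F_{H_d^-}=F_{H_d^+}$ established immediately before the statement. The ``obstacle'' you flag in the reverse direction---passing from equality of the generated maps to equality of the generating functions up to equivalency---is treated no more explicitly in the paper, which simply asserts $(H_d^+)^*=H_d^-$ (up to equivalency) at the same point, so your added caution is reasonable but not a deviation.
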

\begin{proof}
Assume $H_d^-=(H_d^+)^*$. Then,
$$(F_{H_d^+})^*=F_{(H_d^+)^*}=F_{H_d^-}=F_{H_d^+},$$
and so, by definition, the map is self-adjoint.
Now assume $F_{H_d^+}=(F_{H_d^+})^*$. Then,
$$F_{H_d^-}=F_{H_d^+}=(F_{H_d^+})^*=F_{(H_d^+)^*},$$
which implies $(H_d^+)^*=H_d^-$  (up to equivalency) and, by definition, the discrete right Hamiltonian is self-adjoint.
\end{proof}

The previous corollary allows for an easy way to check if a variational integrator is self-adjoint. Assuming the Hamiltonian flow is time-reversible, it follows that the exact discrete right Hamiltonian is self-adjoint. This can also be shown using the definition of a self-adjoint exact discrete right Hamiltonian.

\begin{corollary}
The exact  discrete right Hamiltonian, $H_d^{+,E}$, is self-adjoint.
\end{corollary}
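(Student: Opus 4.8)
The plan is to unfold the \emph{definition} of a self-adjoint exact discrete right Hamiltonian, which by the remark following that definition amounts to verifying the single identity $(H_d^{+,E})^* = H_d^{-,E}$; once this holds, self-adjointness is immediate. I would therefore start from the adjoint formula $(H_d^{+,E})^*(p_0,q_1;h) = -H_d^{+,E}(q_1,p_0;-h)$ and show that the right-hand side is precisely the variational expression defining $H_d^{-,E}(p_0,q_1;h)$.

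First I would substitute the variational definition of the exact discrete right Hamiltonian at the negative step $-h$. The admissible curves $(q,p)$ then live on the interval $[-h,0]$ subject to $q(0)=q_1$ and $p(-h)=p_0$, and after pulling the overall sign inside the extremizer (legitimate, since $\ext$ records a critical value),
$$-H_d^{+,E}(q_1,p_0;-h) = \ext \left( -p_0\, q(-h) - \int_{-h}^{0} \left[ p\dot q - H(q,p) \right] dt \right).$$
The key step is then a time \emph{shift} $s = t + h$, mapping $[-h,0]$ onto $[0,h]$. Because a shift (unlike a reflection) leaves $\dot q$ and $H$ untouched and merely relabels the endpoints, the fixed data $p(-h)=p_0$, $q(0)=q_1$ become $p(0)=p_0$, $q(h)=q_1$, the boundary term $-p_0\,q(-h)$ becomes $-p_0\,q(0)$, and the action integral $\int_{-h}^{0}[p\dot q - H]\,dt$ becomes $\int_0^h[p\dot q - H]\,dt$. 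The resulting extremization is exactly the one defining $H_d^{-,E}(p_0,q_1;h)$, whence $(H_d^{+,E})^* = H_d^{-,E}$ and the conclusion follows.

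Conceptually, this calculation is just the variational encoding of the fact that the exact flow is a symmetric one-step method: $F^h_{H_d^{+,E}}$ is the time-$h$ Hamiltonian flow, which satisfies the one-parameter group identity $F^h_{H_d^{+,E}} \circ F^{-h}_{H_d^{+,E}} = \mathrm{id}$, so that $(F^h_{H_d^{+,E}})^* = F^h_{H_d^{+,E}}$; the previous corollary then yields self-adjointness of $H_d^{+,E}$ directly. This is the sense in which the Hamiltonian flow is ``time-reversible'' here: it is the group property, which holds for any autonomous flow and requires no symmetry of $H$.

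The main obstacle is purely one of bookkeeping: identifying the \emph{correct} reparametrization. The tempting choice of a time reflection $t \mapsto -t$ fails, since it flips the sign of the Hamiltonian term and produces a Type II rather than a Type III boundary structure; the shift $t \mapsto t + h$ is what is needed. Equally, one must track carefully how the negative step $-h$ reorients the interval $[-h,0]$ and relocates the fixed-endpoint data from the terminal time to the initial time, which is exactly where sign and labeling errors tend to arise.
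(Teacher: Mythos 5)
Your proof is correct and takes essentially the same route as the paper: the paper's proof likewise substitutes the adjoint definition $-H_d^{+,E}(q_1,p_0;-h)$ and applies precisely the time shift $(\tilde q(\tau),\tilde p(\tau))=(q(\tau+h),p(\tau+h))$ mapping $[-h,0]$ to $[0,h]$, converting the boundary data $q(0)=q_1$, $p(-h)=p_0$ into the Type III data $p(0)=p_0$, $q(h)=q_1$ and concluding $(H_d^{+,E})^*=H_d^{-,E}$. Your closing observation via the group property of the flow and the previous corollary mirrors the paper's own remark preceding the statement, so nothing further is needed.
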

\begin{proof} A direct calculation shows that
\begin{align*}
(H_d^{+,E})^*(p_0,q_1;h) &= -H_d^{+,E}(q_1,p_0;-h) \\
&= -\left(\tilde{p}(-h)^T\tilde{q}(-h) - \int_0^{-h} [\tilde{p}(\tau)^T\tilde{q}(\tau) - H(\tilde{q}(\tau),\tilde{p}(\tau))] d\tau\right) \\
&= -p(-h+h)^Tq(-h+h) - \int_{-h}^0 [p(\tau+h)^Tq(\tau+h) - H(q(\tau+h),p(\tau+h))] d\tau \\
&= -p(0)^Tq(0) - \int_0^h [p(t)^Tq(t) - H(q(t),p(t))] dt \\
&= H_d^{-,E}(p_0,q_1;h),
\end{align*}
where we used the fact that the time-reversed solution $(\tilde{q}(\tau),\tilde{p}(\tau))$ over the time domain $[-h,0]$ with $(q_1,p_0)$ boundary data is related to the solution curve $(q(t),p(t))$ over the time domain $[0,h]$ with $(q_0,p_1)$ boundary data by $(\tilde{q}(\tau),\tilde{p}(\tau))=(q(\tau+h),p(\tau+h))$.
\end{proof}

The definition of the adjoint also provides a simple way to construct symmetric methods. Given any method defined by $H_d$, we can construct a symmetric method using composition, for example, $F^{\frac{h}{2}}_{H_d} \circ F^{\frac{h}{2}}_{H_d^*}$, which is nothing more than composing a half-step of the adjoint method with a half-step of the method. It is well-known that this leads to a symmetric method, as the following calculation demonstrates,
\begin{align*}
(F^{\frac{h}{2}}_{H_d} \circ F^{\frac{h}{2}}_{H_d^*})^* & = (F^{\frac{h}{2}}_{H_d^*})^* \circ (F^{\frac{h}{2}}_{H_d})^* \\
&= F^{\frac{h}{2}}_{H_d^{**}} \circ F^{\frac{h}{2}}_{H_d^*} \\
&= F^{\frac{h}{2}}_{H_d} \circ F^{\frac{h}{2}}_{H_d^*}.
\end{align*}
More generally, a composition method of the form,
\[ F^{\alpha_s h}_{H_d} \circ F^{\beta_s h}_{H_d^*} \circ \cdots \circ F^{\beta_2 h}_{H_d^*} \circ F^{\alpha_1 h}_{H_d} \circ F^{\beta_1 h}_{H_d^*},\]
where $\alpha_{s+1-i}=\beta_i$ for $i=1,\ldots,s$, will be symmetric. For a more in depth discussion of symmetric composition methods, see Chapter V.3 of \cite{HaLuWa2006}.

\section{Discrete Lagrangians versus Discrete Hamiltonians}

A symplectic method defines a symplectic map, and for any symplectic map there exists, locally, a generating function in terms of at least one of the pairs, $(q_0,q_1)$, $(q_0,p_1)$, $(q_1,p_0)$, which corresponds to a Type I, Type II, and Type III generating function, respectively. Given the respective pair forms an independent set of coordinates, then we are guaranteed the existence, locally, of the corresponding generating function. Therefore, it is not a very interesting question to ask if there is a discrete Hamiltonian or discrete Lagrangian associated with a particular symplectic method.

There are two general methods of constructing a variational integrator, the shooting-based method introduced in \cite{LeSh2011_sbvi} and the Galerkin variational integrators introduced in \cite{MaWe2001} and analyzed in \cite{HaLe2015}. In particular, shooting-based variational integrators are constructed from a choice of a numerical quadrature scheme and an underlying one-step method, whereas Galerkin variational integrators are constructed from the choice of a numerical quadrature scheme and a finite-dimensional function space. With this in mind, an interesting question to ask is the following: If we are given a discrete Lagrangian or Hamiltonian constructed using the shooting-based or Galerkin approach with a particular choice of quadrature rule and either underlying one-step method or finite-dimensional function space, will constructing a different type of generating function based on the same approximation scheme lead to an equivalent symplectic method?
\\

It was shown in \cite{LeZh2011} that the Galerkin variational integrator construction leads to equivalent discrete Lagrangian and discrete Hamiltonian methods for the same choice of quadrature rule and finite-dimensional function space, and the result is given in the following theorem.
\begin{theorem}[Proposition 4.1 of \cite{LeZh2011}]
If the continuous Hamiltonian $H(q,p)$ is hyperregular and we construct a Lagrangian $L(q,\dot{q})$ by the Legendre transformation, then the generalized Galerkin Hamiltonian variational integrator (see \cite{LeZh2011}) and the generalized Galerkin Lagrangian variational integrator, associated with the same choice of basis functions and numerical quadrature formula, are equivalent.
\end{theorem}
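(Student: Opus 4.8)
The plan is to write both Galerkin objects in terms of the common basis functions $\{\phi_i\}$ and the common quadrature rule $\{(b_j,c_j)\}$, and then to show that the discrete right Hamiltonian produced on the Hamiltonian side is exactly the discrete Legendre transform of the discrete Lagrangian produced on the Lagrangian side. On the Lagrangian side, the Galerkin discrete Lagrangian is
$$L_d(q_0,q_1;h)=\ext_{q_d}\ h\sum_j b_j\, L\bigl(q(c_j h),\dot q(c_j h)\bigr),$$
where $q_d$ ranges over the paths in the chosen function space satisfying $q(0)=q_0$, $q(h)=q_1$. On the Hamiltonian side, the generalized Galerkin discrete right Hamiltonian is
$$H_d^+(q_0,p_1;h)=\ext_{q_d,p_d}\left[p_1\, q(h)-h\sum_j b_j\bigl(p(c_j h)\dot q(c_j h)-H(q(c_j h),p(c_j h))\bigr)\right],$$
with $q(0)=q_0$ and $p(h)=p_1$ prescribed and $q(h)$ left free.

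Next I would use hyperregularity. Since $H$ is hyperregular its Legendre transform is a diffeomorphism, and the pointwise identity $L(q,\dot q)=p\dot q-H(q,p)$ holds precisely when $\dot q=D_2H(q,p)$, equivalently $p=D_2L(q,\dot q)$. The concrete step here is to extremize the Hamiltonian action over the momentum degrees of freedom first, holding the configuration path fixed. Stationarity in $p$ forces the discrete Legendre relation $\dot q(c_j h)=D_2H(q(c_j h),p(c_j h))$ at each interior quadrature node, so substituting this back collapses the integrand node-by-node via $p(c_j h)\dot q(c_j h)-H=L(q(c_j h),\dot q(c_j h))$. After eliminating $p$ and then extremizing over the internal configuration coefficients for fixed $q(h)=q_1$, the quadrature sum reduces to $L_d(q_0,q_1;h)$, leaving
$$H_d^+(q_0,p_1;h)=\ext_{q_1}\bigl[p_1 q_1-L_d(q_0,q_1;h)\bigr].$$

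Finally, I would recognize the right-hand side as the standard discrete Type II Legendre transform relating $L_d$ to $H_d^+$, the discrete analogue of $H=p\dot q-L$: its stationarity condition yields $p_1=D_2L_d(q_0,q_1)$ and $H_d^+=p_1q_1-L_d$, and it is a standard fact that a discrete right Hamiltonian obtained from a discrete Lagrangian in this way generates the same symplectic map, $\tilde F_{H_d^+}=\tilde F_{L_d}$. This is exactly the claimed equivalence of the two integrators.

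The \textbf{main obstacle} is the momentum elimination in the second step: one must check that extremizing over the finite-dimensional momentum field genuinely enforces $\dot q(c_j h)=D_2H(q(c_j h),p(c_j h))$ at every quadrature node. In the continuous phase-space principle this is automatic, but with only finitely many momentum coefficients available, one must verify that the admissible variations $\delta p$ produce independent variations $\delta p(c_j h)$ at the nodes, together with correctly accounting for any endpoint node at which $p$ is constrained by $p(h)=p_1$ rather than free. This is a unisolvence/compatibility condition between the momentum function space and the quadrature points that is built into the generalized Galerkin construction; once it holds, the remaining steps rely only on the smoothness and invertibility supplied by hyperregularity and are routine.
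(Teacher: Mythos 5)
Note first that the paper you are checking against does not actually prove this statement: it is quoted verbatim as Proposition 4.1 of \cite{LeZh2011}, so the only available comparison is with the original argument there. Your proposal reproduces that argument in its essentials: extremize over the momentum degrees of freedom first, use hyperregularity to collapse $p\dot q - H$ into $L$ node by node at the quadrature points, and then recognize what remains as the discrete Type II Legendre transform $H_d^+(q_0,p_1;h)=\ext_{q_1}\bigl[p_1 q_1 - L_d(q_0,q_1;h)\bigr]$, which is known to generate the same symplectic map as $L_d$. The one refinement worth making is that the ``main obstacle'' you flag dissolves by construction rather than requiring a unisolvence hypothesis: in the generalized Galerkin Hamiltonian integrator of \cite{LeZh2011}, only the configuration curve is expanded in the basis functions, while the momentum enters solely through its values at the quadrature nodes, which are treated as independent internal stage variables (this is precisely what ``generalized'' refers to). Consequently, stationarity in each nodal momentum $p_j$ directly enforces $\dot q(c_j h)=D_2H(q(c_j h),p_j)$ with nothing to verify about admissible variations, and no node is constrained by $p(h)=p_1$, since $p_1$ enters only through the boundary term $p_1\, q_d(h)$; your remaining steps (iterated rather than joint extremization, with $\ext$ understood as stationarity) are then routine and match the cited proof.
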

Does this hold for other types of variational integrators? To begin to address this question we will examine the approximation scheme of a Taylor variational integrator, which is a variant of the shooting-based variational integrator of \cite{LeSh2011_sbvi}, and is also related to the prolongation--collocation variational integrators developed in \cite{LeSh2011}.

\subsection{Taylor Variational Integrators}
Consider the exact discrete Lagrangian, which is defined as,
\begin{align*}
L_d^E(q_0,q_1;h)&=\int_0^h L(q_{01}(t),\dot q_{01}(t)) dt,
\end{align*}
where $q_{01}(0)=q_0,$ $q_{01}(h)=q_1,$ and $q_{01}$ satisfies the Euler--Lagrange equation in the time interval $(0,h)$. Then, the Taylor discrete Lagrangian is constructed as follows:
\begin{enumerate}
\item Construct a $(r+1)$-order Taylor expansion on the configuration manifold about the initial time and implicitly solve for an approximation to the initial velocity $\tilde{v}_0$,
$$q_1 = \pi_Q \circ \Psi^{(r+1)}_h(q_0,\tilde{v}_0).$$
\item Pick a quadrature rule of order $s$ with quadrature weights and nodes given by $(b_i,c_i)$ for $i=1,\ldots, m$.
\item Construct an $r$-order Taylor method on the tangent bundle, $TQ$, and use it to generate approximations of $(q(t),v(t))$ at the quadrature nodes,
$$(q_{c_i},v_{c_i})= \Psi^{(r)}_{c_ih}(q_0,\tilde{v}_0).$$
\item Apply the quadrature rule to form the discrete Lagrangian of order $\min(r+1,s)$,
$$L_d(q_0,q_1;h)= h\sum_{i=1}^m b_{i} L\left(\Psi_{c_ih}^{(r)}(q_0,\tilde{v}_0)\right).$$
\end{enumerate}
Then, the Taylor variational integrator is implicitly defined by the implicit discrete Euler--Lagrange equations,
\begin{equation}
p_0=-D_1 L_d(q_0, q_1),\qquad p_1=D_2 L_d(q_0, q_1).\label{IDEL}
\end{equation}
\begin{example}As an example consider a first-order Taylor discrete Lagrangian. 
\begin{enumerate}
\item Solve $q_1 = q_0 + h\tilde{v}_0$ for $\tilde{v}_0$. This implies $\tilde{v}_0=\frac{q_1-q_0}{h}$.
\item The quadrature rule used here will be the rectangular rule about the initial point with weight and node $(1,0)$.
\item The zeroth-order Taylor expansion trivially yields,
$$(q_0,\tilde{v}_0)= \Psi^{1}_0(q_0,\tilde{v}_0).$$
\item Using the quadrature rule, we have the discrete Lagrangian,
$$L_d(q_0,q_1;h)= hL\Big(q_0,\frac{q_1-q_0}{h}\Big).$$
\end{enumerate}
Assuming a Lagrangian of the form $L(q,\dot{q})=\frac{1}{2}\dot{q}^TM\dot{q}-V(q)$, the implicit discrete Euler--Lagrange equations \eqref{IDEL} yield
\begin{align*}
p_0 = M\frac{q_1-q_0}{h} - h\nabla V(q_0), \qquad p_1=M\frac{q_1-q_0}{h}.
\end{align*}
Rearranging these equations, we see that this corresponds to symplectic Euler-A.
\end{example}

The boundary-value formulation of the exact discrete right Hamiltonian is given by,
\begin{align*}
H_d^{+,E}(q_0,p_1)& =  \left(p_1 q_1 - \int_0^T \left[ p \dot{q}-H(q, p) \right]
dt\right),
\end{align*}
where $(q(t),p(t))$ satisfy Hamilton's equations with boundary conditions $q(0)=q_0$, $p(T)=p_1$.
Now let us consider the construction of a Taylor discrete right Hamiltonian.
\begin{enumerate}
\item Construct a $r$-order Taylor expansion on the cotangent bundle, $T^*Q$, and solve for $\tilde{p}_0$,
$$p_1 = \pi_{T^*Q} \circ \Psi^{(r)}_h(q_0,\tilde{p}_0).$$
\item Pick a quadrature rule of order $s$ with quadrature weights and nodes given by $(b_i,c_i)$ for $i=1,\ldots,m$.
\item Use a $r$-order Taylor method to generate approximations of $(q(t),p(t))$ at the quadrature nodes,
$$(q_{c_i},p_{c_i})= \Psi^{(r)}_{c_ih}(q_0,\tilde{p}_0),$$
and use a $(r+1)$-order Taylor method on the configuration manifold to generate the approximation to the boundary term $q_1$,
$$\tilde{q}_1=\pi_Q \circ \Psi^{(r+1)}_h(q_0,\tilde{p}_0).$$
\item Use the quadrature rule and approximate boundary term, $\tilde{q}_1$, to construct the discrete right Hamiltonian of order $\min(r+1,s)$,
$$H_d^+(q_0,p_1;h)=p_1^T\tilde{q}_1 - h \sum^m_{i=1} \Big[p_{c_i}^T\dot{q}_{c_i} - H\Big(\Psi^{(r)}_{c_ih}(q_0,\tilde{p}_0)\Big)\Big],$$
where $\dot{q}_{c_i}$ is obtained by inverting the continuous Legendre transform, $(q_{c_i},p_{c_i})=\mathbb{F}L(q_{c_i},\dot{q}_{c_i})$.
\end{enumerate}
The method is implicitly defined by the implicit discrete Hamilton's equations,
\begin{equation}
q_1 = D_2H_d^+(q_0, p_1), \qquad p_0 = D_1 H_d^+(q_0, p_1). \label{IHE}
\end{equation}

\begin{example}We now construct a first-order Taylor discrete right Hamiltonian using the rectangular rule about the initial point.
\begin{enumerate}
\item The zeroth-order Taylor expansion yields $p_1=\tilde{p}_0$.
\item The rectangular rule about the initial point is given by weight and node $(1,0)$.
\item The boundary term, $\tilde{q}_1$, is given by the first-order Taylor method, $\tilde{q}_1=q_0+hM^{-1}p_1$.
\item The discrete right Hamiltonian is given by,
$$H_d^+(q_0,p_1;h)=p_1^T(q_0+hM^{-1}p_1)-h(p_1^TD_2H(q_0,p_1)-H(q_0,p_1)).$$
\end{enumerate}
Assuming a Hamiltonian of the form $H(q,p)=\frac{1}{2}p^TM^{-1}p+V(q)$, the implicit discrete Hamilton's equations \eqref{IHE} yield
\begin{align*}
q_1 = q_0 + hM^{-1}p_1, \qquad p_0= p_1 + h\nabla V(q_0),
\end{align*}
which when rearranged recovers symplectic Euler-A. 
\end{example}
In this case the discrete Lagrangian and discrete right Hamiltonian constructed via the Taylor variational integrator method have given rise to the same method. However, had we chosen to apply the rectangular rule about the end point, then the resulting Taylor discrete Lagrangian method would be symplectic Euler-B, but the  Taylor discrete \textbf{right} Hamiltonian method would not be symplectic Euler-B. Instead, had we constructed a Taylor discrete \textbf{left} Hamiltonian using the rectangular quadrature rule about the end point, then the resulting method would be symplectic Euler-B. To understand why, all we need to do is look at the independent coordinates for each respective generating function. The discrete Lagrangian is defined in terms of $(q_0,q_1)$, which means that the rectangular rule around either the end point or the initial point will imply the nonlinear term, $V(q)$, involves the true respective value and will not be implicit for low order expansions. The discrete right Hamiltonian is defined in terms of $(q_0,p_1)$, so the rectangular rule around the initial point will involve $V(q_0)$, but applying the rule about the endpoint will involve $V(q_0+hp_1)$. On the other hand it is the exact opposite for the discrete left Hamiltonian, which is defined in terms of $(q_1,p_0)$. The following tables summarize these statements.
\\

\begin{center}
    \begin{tabular}{|| p{3cm} | p{3cm} | p{3cm} | p{3cm} ||}
    \hline
    Quad. Rule & & & \\ (Initial Point) & Type I ($q_0,q_1$) & Type II ($q_0,p_1$) & Type III ($q_1,p_0$) \\ \hline
    \multirow{4}{*}{Approx.} & $q_0=q_0$ & $q_0=q_0$ & $q_0=q_1-hM^{-1}p_0$ \\ & $q_1=q_1$ & $q_1=q_0+hM^{-1}p_1$ & $q_1=q_1$ \\ & $v_0=\frac{q_1-q_0}{h}$ & $p_0=p_1$ & $p_0=p_0$ \\ & $v_1=\frac{q_1-q_0}{h}$ & $p_1=p_1$ & $p_1=p_0$ \\ \hline
    \multirow{3}{*}{Transforms} & $p_0 = M(\frac{q_1-q_0}{h}) + h\nabla V(q_0)$ & $p_0 = p_1 + h\nabla V(q_0)$ & 
    $q_0 = q_1 - hM^{-1}p_0$ \\ 
    & & & $\quad+ h^2M^{-1}\nabla V(q_1-hM^{-1}p_0)$\\
    & $p_1 = M(\frac{q_1-q_0}{h})$ & $q_1 = q_0 + hM^{-1}p_1$ & $p_1 = p_0 - h\nabla V(q_1-hM^{-1}p_0)$ \\ \hline
    \multirow{2}{*}{Method} & $q_1 = q_0 +hM^{-1}p_1$ & $q_1 = q_0 + hM^{-1}p_1$ & $q_1 = q_0 +hM^{-1}p_1$ \\ & $p_1 = p_0 - h\nabla V(q_0)$ & $p_1 = p_0 - h\nabla V(q_0)$ & $p_1 = p_0 - h\nabla V(q_1-hM^{-1}p_0)$ \\ \hline
    Same as & & & \\ Type I Method & NA & \hl{Yes} & \hl{No} \\ \hline
    Approx. satisfies & & & \\ $-D_1L_d(q_0,q_1)=p_0$ & NA & No & No \\ \hline
    Approx. satisfies & & & \\ $D_2L_d(q_0,q_1)=p_1$ & NA & Yes & Yes \\ \hline
    Independent Variable & & &\\ satisfies Legendre & & &\\ Transform & NA & \hl{Yes} & \hl{No} \\ \hline
    \end{tabular}
\end{center}


\begin{center}
    \begin{tabular}{|| p{3cm} | p{3cm} | p{3cm} | p{3cm} ||}
    \hline
    Quad. Rule & & &\\ (End Point) & Type I ($q_0,q_1$) & Type II ($q_0,p_1$) & Type III ($q_1,p_0$) \\ \hline
    \multirow{4}{*}{Approx.} & $q_0=q_0$ & $q_0=q_0$ & $q_0=q_1-hM^{-1}p_0$ \\ & $q_1=q_1$ & $q_1=q_0+hM^{-1}p_1$ & $q_1=q_1$ \\ & $v_0=\frac{q_1-q_0}{h}$ & $p_0=p_1$ & $p_0=p_0$ \\ & $v_1=\frac{q_1-q_0}{h}$ & $p_1=p_1$ & $p_1=p_0$ \\ \hline
    \multirow{3}{*}{Transforms} & $p_0 = M(\frac{q_1-q_0}{h})$ & $p_0 = p_1 + h\nabla V(q_0+hM^{-1}p_1)$ & $q_0 = q_1 - hM^{-1}p_0$ \\ & $p_1 = M(\frac{q_1-q_0}{h})-h\nabla V(q_1)$ & $q_1 = q_0 + hM^{-1}p_1 $ & $p_1 = p_0 - h\nabla V(q_1)$ \\ 
    & & $\quad+ h^2M^{-1}\nabla V(q_0+hM^{-1}p_1)$ & \\\hline
    \multirow{2}{*}{Method} & $q_1 = q_0 +hM^{-1}p_0$ & $q_1 = q_0 + hM^{-1}p_0$ & $q_1 = q_0 +hM^{-1}p_0$ \\ & $p_1 = p_0 - h\nabla V(q_1)$ & $p_1 = p_0 - h\nabla V(q_0+hM^{-1}p_1)$ & $p_1 = p_0 - h\nabla V(q_1)$ \\ \hline
    Same as & & &\\ Type I Method & NA & \hl{No} & \hl{Yes} \\ \hline
    Approx. satisfies & & &\\ $-D_1L_d(q_0,q_1)=p_0$ & NA & Yes & Yes \\ \hline
    Approx. satisfies & & &\\ $D_2L_d(q_0,q_1)=p_1$ & NA & No & No \\ \hline
    Independent Variable & & & \\ satisfies Legendre & & &\\ Transform & NA & \hl{No} & \hl{Yes} \\ \hline
    \end{tabular}
\end{center}


Therefore, the answer to our original question is that in general, a fixed approximation scheme used to construct a discrete Lagrangian will not generate the same method when it is used to construct a discrete Hamiltonian. It seems that if the approximated value of $q_1$ or $q_0$ (for Type II and Type III, respectively) satisfies the appropriate discrete Legendre transform, then the Type II or Type II approximation will yield the same method as the Type I approximation. In general, how might the two resulting methods vary? A complete characterization of this issue is subtle, and beyond the scope of this paper, but it will be a topic of future work. For now, we will consider how the two approaches differ when combined with the method of averaging, which will also serve to illustrate how the type of boundary data can affect the numerical properties of the method.

\subsection{Averaged Hamiltonians}
Averaging methods have played a role in solving differential equations since at least as far back as the time of Lagrange (see \cite{Ver2000}), and they continue to play a key role particularly in the field of numerical differential equations applied to nearly integrable systems or problems with multiple timescales.
We consider perturbed Hamiltonian systems with Hamiltonians of the form,
\begin{equation}
H = H^{(A)} + \epsilon H^{(B)}, \label{pham}
\end{equation}
where $\epsilon \ll 1$ and the dynamics of the Hamiltonian system corresponding to $H^{(A)}$ is exactly solvable or at the very least cheap to approximate. We call this an almost-integrable system. The motivation being that the dynamics of the system are largely influenced by an integrable Hamiltonian with simpler dynamics, but smaller influences also play a role in the overall dynamics. An example is the classic $n$-body problem of the solar system, where a particular planet's trajectory is largely influenced by the sun, but other planets and nearby objects also play a role. Averaging methods can be constructed to exploit the larger influence of $H^{(A)}$ on the dynamics of the system by averaging out the smaller influences. Ideally, averaging techniques will allow for larger time steps to be used while still yielding a reasonable approximation to the solution.
\\

A variational integrator for such a system was proposed in \cite{Farr2009} using a discrete Lagrangian formulation, which drew inspiration from the kick-drift-kick leapfrog method (see \cite{WiHo1991}). We will discuss the Lagrangian formulation (hereafter referred to as the averaged Lagrangian) and in addition construct an analogous method in terms of a discrete right Hamiltonian (referred to as the averaged Hamiltonian). The Lagrangian corresponding to \eqref{pham} is given by,
\begin{equation}
L = L^{(A)} + \epsilon L^{(B)}.
\end{equation}
Making the assumption that $L^{(B)}(q(t),\dot{q}(t))=-V^{(B)}(q(t))$, then the kick-drift-kick leapfrog method is given by the discrete Lagrangian,
\begin{align*}
L_d(q_0,q_1;h) = L_d^{(A),E}(q_0,q_1;h) - \epsilon \frac{h}{2} \Bigl[ V^{(B)}(q_0) + V^{(B)}(q_1) \Bigr],
\end{align*}
where the trapezoid quadrature rule has been used to approximate $\int_0^h V^{(B)}(q(t)) dt$.
The discrete Hamiltonian map is implicitly defined by,
\begin{align*}
-p_0 &= D_1 L_d^{(A),E}(q_0,q_1;h) - \epsilon \frac{h}{2} \nabla V^{(B)}(q_0), \\
p_1 &= D_2 L_d^{(A),E}(q_0,q_1;h) - \epsilon \frac{h}{2} \nabla V^{(B)}(q_1).
\end{align*}
Rearranging terms we have,
\begin{align*}
-(p_0- \epsilon \frac{h}{2} \nabla V^{(B)}(q_0) ) &= D_1 L_d^{(A),E}(q_0,q_1;h), \\
p_1 &= D_2 L_d^{(A),E}(q_0,q_1;h) - \epsilon \frac{h}{2} \nabla V^{(B)}(q_1).
\end{align*}
This can be interpreted as first kicking $p_0$ by $- \epsilon \frac{h}{2} \nabla V^{(B)}(q_0)$, then we drift by $L_d^{(A),E}$ to get $q_1$, and finally we kick $p_1^{(A)}$ by $- \epsilon \frac{h}{2} \nabla V^{(B)}(q_1)$ to get $p_1$. This method has local truncation error of size $\mathcal{O}(\epsilon h^3)$. \\

The method of interest, proposed by Will Farr, improves the local truncation error to $\mathcal{O}(\epsilon^2 h^3)$, and is defined in terms of a discrete Lagrangian, $L_d$. We will reproduce the construction of the discrete Lagrangian formulation, then introduce a discrete right Hamiltonian formulation, $H_d^+$, in the same spirit. To clarify notation we will be assuming that $(q_0,p_0)$ are the initial conditions for both implementations, and we introduce $(q_{1,L_d},p_{1,L_d})$ and $(q_{1,H_d^+},p_{1,H_d^+})$ to denote the respective numerical approximations after one timestep. The method proposed in \cite{Farr2009} used a discrete Lagrangian of the form,
\begin{align*}
L_d(q_0,q_{1,L_d},h) &= L_d^{(A),E}(q_0,q_{1,L_d};h) + \epsilon \int_0^h L^{(B)}(q_A(q_0,q_{1,L_d},t),\dot{q}_A(q_0,q_{1,L_d},t)) dt \\
&= L_d^{(A),E}(q_0,q_{1,L_d};h) - \epsilon \int_0^h V^{(B)}(q_A(q_0,q_{1,L_d},t)) dt,
\end{align*}
where we denote the trajectory corresponding to $L^{(A)}$ with boundary conditions $(q_0,q_1)$ by $(q_A(t),\dot{q}_A(t))$. The idea is to use the dynamics of $L^{(A)}$, which is either solved for exactly or efficiently approximated, to average the contribution of $L^{(B)}$ to the dynamics.
The corresponding discrete Hamiltonian map is given implicitly by
\begin{subequations}\label{p0}
\begin{align}
-p_0 &= D_1L_d^{(A),E}(q_0,q_{1,L_d};h) - \epsilon \int_0^h D_1V^{(B)}(q_A(q_0,q_{1,L_d},t)) dt,  \\
p_{1,L_d} &= D_2L_d^{(A),E}(q_0,q_{1,L_d};h) - \epsilon \int_0^h D_2V^{(B)}(q_A(q_0,q_{1,L_d},t)) dt.
\end{align}
\end{subequations}
The method defined by the above has local truncation error of size $\mathcal{O}(\epsilon^2 h^3)$. Using the notation $p_0^A(q_0,q_{1,L_d})=-D_1L_d^{(A),E}(q_0,q_{1,L_d};h)$ and $p_1^A(q_0,q_{1,L_d})=D_2L_d^{(A),E}(q_0,q_{1,L_d};h)$, we rearrange the above equations to get
\begin{subequations}\label{Leqtns}
\begin{gather}
p_0 - \epsilon \int_0^h D_1V^{(B)}(q_A(q_0,q_{1,L_d},t)) dt = p_0^A(q_0,q_{1,L_d}),\\
p_{1,L_d} = p_1^A(q_0,q_{1,L_d}) - \epsilon \int_0^h D_2V^{(B)}(q_A(q_0,q_{1,L_d},t)) dt. 
\end{gather}
\end{subequations}
This can be interpreted as first implicitly kicking $p_0$ by $- \epsilon \int_0^h D_1V^{(B)}(q_A(q_0,q_{1,L_d},t)) dt$, which is the impulse due to the force associated with potential $V^{(B)}$ averaged over the trajectory generated by $L^{(A)}$. Then by implicitly drifting along $L_d^{(A),E}$ we arrive at $q_{1,L_d}$, and finally kicking $p_1^A(q_0,q_{1,L_d})$ by the trajectory-averaged impulse $- \epsilon \int_0^h D_2V^{(B)}(q_A(q_0,q_{1,L_d},t)) dt$ to get $p_{1,L_d}$. In \cite{Farr2009}, it is noted that $- \epsilon \int_0^h D_1V^{(B)}(q_A(q_0,q_{1,L_d},t)) dt$ is an average along the trajectory generated by $L^{(A)}$ which, in general, gives more weight to the initial periods of the trajectory, while $- \epsilon \int_0^h D_2V^{(B)}(q_A(q_0,q_{1,L_d},t)) dt$ is an average along the trajectory generated by $L^{(A)}$ that, in general, favors the latter periods of the trajectory. The interpretation is not quite as clear as in the previous method due to the implicit nature of the equations, but nonetheless the role of averaging is quite apparent.

 Now let us consider the discrete right Hamiltonian given by the same form of approximation,
\begin{align*}
H_d^+(q_0,p_{1,H_d^+};h) = H_d^{(A),+,E}(q_0,p_{1,H_d^+};h) + \epsilon \int_0^h V^{(B)}(q_A(q_0,p_{1,H_d^+},t)) dt.
\end{align*}
The discrete right Hamiltonian map is given implicitly by
\begin{align*}
p_0 &= D_1 H_d^{(A),+,E}(q_0,p_{1,H_d^+};h) + \epsilon \int_0^h D_1 V^{(B)}(q_A(q_0,p_{1,H_d^+},t)) dt, \\
q_{1,H_d^+} &= D_2 H_d^{(A),+,E}(q_0,p_{1,H_d^+};h) + \epsilon \int_0^h D_2 V^{(B)}(q_A(q_0,p_{1,H_d^+},t)) dt. 
\end{align*}
Using the notation $p_0^A(q_0,p_{1,H_d^+}) = D_1 H_d^{(A),+,E}(q_0,p_{1,H_d^+};h)$ and $q_1^A(q_0,p_{1,H_d^+}) = D_2 H_d^{(A),+,E}(q_0,p_{1,H_d^+};h)$, we rearrange the equations to yield
\begin{subequations}\label{Heqtns}
\begin{gather}
p_0 - \epsilon \int_0^h D_1 V^{(B)}(q_A(q_0,p_{1,H_d^+},t)) dt = p_0^A(q_0,p_{1,H_d^+}),\label{Heqtns1}\\
q_{1,H_d^+} = q_1^A(q_0,p_{1,H_d^+}) + \epsilon \int_0^h D_2 V^{(B)}(q_A(q_0,p_{1,H_d^+},t)) dt.\label{Heqtns2} 
\end{gather}
\end{subequations}
This can be interpreted as first implicitly kicking $p_0$ by $- \epsilon \int_0^h D_1 V^{(B)}(q_A(q_0,p_{1,H_d^+},t)) dt$, then implicitly drifting by $H^{(A)}$ to get $p_{1,H_d^+}$. Finally, shifting $q_1^A(q_0,p_{1,H_d^+})$ by $\epsilon \int_0^h D_2 V^{(B)}(q_A(q_0,p_{1,H_d^+},t)) dt$ we arrive at $q_{1,H_d^+}$.
\begin{theorem}
The method defined implicitly by \eqref{Heqtns} has local truncation error $\mathcal{O}(\epsilon^2 h^3)$.
\end{theorem}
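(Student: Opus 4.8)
The plan is to reduce the claim to the variational error-analysis theorem for discrete right Hamiltonians proved above, by showing that the averaged discrete right Hamiltonian approximates the exact discrete right Hamiltonian to the stated order, i.e.,
$$H_d^+(q_0,p_1;h) = H_d^{+,E}(q_0,p_1;h) + \mathcal{O}(\epsilon^2 h^3),$$
as functions of the same Type II boundary data $(q_0,p_1)$. Once this estimate (together with the corresponding estimate on the first derivatives $D_1,D_2$) is established, differentiating the error and passing through the discrete Legendre transforms $\mathbb{F}^\pm H_d^+$ exactly as in the proof of that theorem — invoking Lemma~\ref{estimates} — yields $\tilde{F}^h_{H_d^+} = \tilde{F}^h_{H_d^{+,E}} + \mathcal{O}(\epsilon^2 h^3)$. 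Since $\tilde{F}^h_{H_d^{+,E}}$ is the exact time-$h$ flow of $H$, this difference is precisely the local truncation error, giving the result.

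The heart of the argument is the generating-function estimate, which I would obtain by a perturbation expansion about the $H^{(A)}$-trajectory. Let $(q,p)$ denote the exact trajectory of the full Hamiltonian $H=H^{(A)}+\epsilon V^{(B)}$ subject to the Type II data $q(0)=q_0$, $p(h)=p_1$, and let $(q_A,p_A)$ denote the $H^{(A)}$-trajectory subject to the same data $q_A(0)=q_0$, $p_A(h)=p_1$. Writing the exact discrete right Hamiltonian as
$$H_d^{+,E}=\Big(p_1 q(h)-\int_0^h[p\dot q - H^{(A)}(q,p)]\,dt\Big) + \epsilon\int_0^h V^{(B)}(q)\,dt,$$
I would recognize the bracketed functional as the $H^{(A)}$ phase-space action $\mathcal{S}^{(A)}$, whose extremum over admissible curves is exactly $H_d^{(A),+,E}$. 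The crucial point is that the deviations $\delta q = q-q_A$, $\delta p = p-p_A$ satisfy the admissible homogeneous boundary conditions $\delta q(0)=0$ and $\delta p(h)=0$, so they are legitimate variations of the Type II extremal problem; since $(q_A,p_A)$ extremizes $\mathcal{S}^{(A)}$, the first variation vanishes and $\mathcal{S}^{(A)}[q,p]=H_d^{(A),+,E}+\tfrac12\delta^2\mathcal{S}^{(A)}+\cdots$, with the leading correction quadratic in $(\delta q,\delta p)$. Combining this with the linearization $\epsilon\int_0^h V^{(B)}(q)\,dt = \epsilon\int_0^h V^{(B)}(q_A)\,dt + \epsilon\int_0^h \nabla V^{(B)}(q_A)\cdot\delta q\,dt + \cdots$ identifies the averaged Hamiltonian $H_d^+$ and leaves an error that is quadratic in the deviation plus the $\epsilon\,\delta q$ cross term.

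It then remains to size these remainders in $h$. Because $V^{(B)}$ depends only on $q$, the difference equations for $(\delta q,\delta p)$ are forced only through $\dot{\delta p}=-\epsilon\nabla V^{(B)}(q)+(\text{linear in }\delta)$, and solving this boundary-value problem to leading order over the short interval $[0,h]$ gives $\delta p = \mathcal{O}(\epsilon h)$ and, since $\delta q(0)=0$, $\delta q = \mathcal{O}(\epsilon h^2)$. Substituting these into the second variation of $\mathcal{S}^{(A)}$ — whose dominant terms are $\int_0^h \delta p\,\dot{\delta q}\,dt$ and $\int_0^h \partial_{pp}^2 H^{(A)}(\delta p)^2\,dt$, each of order $h\cdot(\epsilon h)^2$ — and into the cross term $\epsilon\int_0^h\nabla V^{(B)}(q_A)\cdot\delta q\,dt = \mathcal{O}(\epsilon\cdot h\cdot\epsilon h^2)$, all contributions are $\mathcal{O}(\epsilon^2 h^3)$, which establishes the generating-function estimate.

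The main obstacle I anticipate is the careful $h$-bookkeeping in the last step: making the leading-order solution of the linearized boundary-value problem rigorous (existence of both trajectories for small $h$ and uniform-on-compacta bounds on the linearization), verifying that the quadratic terms involving $\delta q$ alone are genuinely higher order and do not spoil the $h^3$ scaling, and checking that differentiating the $\mathcal{O}(\epsilon^2 h^3)$ error in $(q_0,p_1)$ preserves this order so that the hypotheses of the error-analysis theorem and Lemma~\ref{estimates} are met. The Lagrangian counterpart in \cite{Farr2009} sidesteps the boundary-term subtlety because Type I data fixes both endpoints in configuration space; the novelty here is that the vanishing of the first variation relies specifically on the mixed Type II conditions $\delta q(0)=0$, $\delta p(h)=0$.
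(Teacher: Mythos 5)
Your proposal is correct and follows essentially the same route as the paper's proof: both split the error $H_d^{+,E}-H_d^+$ into the $H^{(A)}$ phase-space action difference and the $\epsilon\int_0^h V^{(B)}$ difference, kill the first variation of the action using that $(q_A,p_A)$ extremizes the Type~II problem with $\delta q(0)=0$, $\delta p(h)=0$ (the paper cites Lemma~2.1 of \cite{LeZh2011} for this), and then use $\delta q=\mathcal{O}(\epsilon h^2)$, $\delta p=\mathcal{O}(\epsilon h)$ to size the quadratic remainder and the cross term as $\mathcal{O}(\epsilon^2h^3)$. Your bookkeeping is in fact slightly more explicit than the paper's (which estimates $\delta q_A=\mathcal{O}(\epsilon h)$ and leaves the reduction to the one-step map implicit under the phrase ``variational error analysis''), but the decomposition, the key vanishing-first-variation step, and the scaling argument are the same.
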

\begin{proof}
Using variational error analysis, we need to show
\begin{align*}
\mathcal{O}(\epsilon^2 h^3) &= H_d^{E,+} - H_d^+ \\
&= \Delta_A + \epsilon \Delta_B,
\end{align*}
where $\Delta_A$ is given by
\begin{align*}
p(h)^Tq(h) - \int_0^h [p(t)^T\dot{q}(t) - H^{(A)}(q(t),p(t))] dt - \Biggl(p_A(h)^Tq_A(h) - \int_0^h [p_A(t)^T\dot{q}_A(t) - H^{(A)}(q_A(t),p_A(t))] dt\Biggr), 
\end{align*}
and $\epsilon\Delta_B$ is given by
\begin{align*}
\epsilon \int_0^h \biggl[V^{(B)}(q(t)) - V^{(B)}(q_A(t))\biggr] dt.
\end{align*}
Using a functional Taylor expansion, $\Delta_A$ becomes
\begin{align*}
\Delta_A &= \frac{\delta}{\delta q_A} \left(\int_0^h [p_A(t)^T\dot{q}_A(t) - H^{(A)}(q_A(t),p_A(t))] dt \right) \delta q_A \\
&\qquad+ \frac{\delta^2}{\delta q_A^2} \left(\int_0^h [p_A(t)^T\dot{q}_A(t) - H^{(A)}(q_A(t),p_A(t))] dt \right) \delta q_A^2 + \mathcal{O}(\delta q_A^3),
\end{align*}
where $\delta q_A$ is the difference between $q$ and $q_A$. Noting that $q$ and $q_A$ differ in forces of order $\epsilon$ and $p$ differs from $p_A$ to first order in $\epsilon h$, implies that $\delta q_A$ is on the order of $\mathcal{O}(\epsilon h)$. This can be seen explicitly by comparing Taylor expansions about time zero. Since $q_A$ satisfies Hamilton's equations for $H^{(A)}$, the first variation vanishes (see Lemma 2.1 of \cite{LeZh2011}) leaving a term on the order of $h \delta q_A^2$. Therefore, we have
\begin{align*}
\Delta_A = \mathcal{O}(\epsilon^2 h^3).
\end{align*}
Likewise, a functional Taylor expansion for $\Delta_B$ yields,
\begin{align*}
\Delta_B = \frac{\delta}{\delta q_A}\left[ \int_0^h V^{(B)}(q_A(t))dt \right] \delta q_A + \mathcal{O}(\delta q_A^2).
\end{align*}
Noting that $V^{(B)}$ is only a function of $q_A$ and that $q$ differs from  $q_A$ on the order of $\epsilon h^2$, implies $ \epsilon \Delta_B = \mathcal{O}(\epsilon^2 h^3)$.
\end{proof}

Are the maps defined by $L_d$ and $H_d^+$ the same map? Or equivalently, is $H_d^+$ the Legendre transform of $L_d$? The answer is no, but to see this let us suppose it is true. The Legendre transform of $L_d(q_0,q_{1,L_d};h)$ is given by $p_{1,L_d}^Tq_{1,L_d} - L_d(q_0,q_{1,L_d};h)$, where $q_{1,L_d}$ is defined in terms of $p_{1,L_d}$ and $q_0$. Expanding this out we have,
\begin{align*}
p_{1,L_d}^Tq_{1,L_d} - L_d^{(A),E}(q_0,q_{1,L_d};h) + \epsilon \int_0^h V^{(B)}(q_A(q_0,q_{1,L_d},t)) dt.
\end{align*}
Given that $H_d^+(q_0,p_{1,H_d^+},h) = H_d^{(A),+,E}(q_0,p_{1,H_d^+},h) + \epsilon \int_0^h V^{(B)}(q_A(q_0,p_{1,H_d^+},t)) dt$, this implies that $V^{(B)}(q_A(q_0,q_{1,L_d},t)) = V^{(B)}(q_A(q_0,p_{1,H_d^+},t))$, since $V^{(B)}$ can be any smooth function that keeps $L$ non-degenerate and $h$ is some positive real number. However, assuming that $(q_{1,L_d},p_{1,L_d})=(q_{1,H_d^+},p_{1,H_d^+})$, will in general imply $V^{(B)}(q_A(q_0,q_{1,L_d},t)) \neq V^{(B)}(q_A(q_0,p_{1,H_d^+},t))$. To show this last claim, first note that all we need to show is that $q_A(q_0,q_{1,L_d},t) \neq q_A(q_0,p_{1,H_d^+},t)$. This inequality holds, since, as can be seen from \eqref{p0}, in general the map defined by $L_d^{(A),E}$ is not the same as the map defined by $L_d$. Therefore, the contradiction is complete, and in general, the maps defined by $L_d$ and $H_d^+$ are not the same map. However, both of these maps are self-adjoint.
\begin{theorem}
Assuming the flow associated with $L^{(A)}$ is time-reversible, then both methods, defined  respectively by \eqref{Leqtns} and \eqref{Heqtns}, are symmetric methods.
\end{theorem}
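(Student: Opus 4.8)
The plan is to show that each of the two averaged methods is self-adjoint by identifying the adjoint of its defining generating function and checking it equals the Legendre transform of that generating function, which by the corollary on self-adjointness (and its Lagrangian analogue in \cite{MaWe2001}) is equivalent to symmetry of the associated one-step method. For the averaged Lagrangian, I would verify directly that $L_d^* = L_d$ using the definition $L_d^*(q_0,q_1,h)=-L_d(q_1,q_0,-h)$; for the averaged Hamiltonian, I would verify that its adjoint, the Type III generating function $(H_d^+)^*(p_0,q_1;h) = -H_d^+(q_1,p_0;-h)$, coincides up to equivalency with the associated discrete left Hamiltonian, i.e.\ that $H_d^+$ is self-adjoint in the sense of the definition preceding the corollary.

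The key structural input is that each averaged generating function splits as an exact-$L^{(A)}$ (or exact-$H^{(A)}$) piece plus an $\epsilon$-averaging correction, so I would treat the two summands separately. For the leading pieces, $L_d^{(A),E}$ and $H_d^{(A),+,E}$ are the \emph{exact} discrete Lagrangian and exact discrete right Hamiltonian for the time-reversible flow of $L^{(A)}$; by the standard result that a time-reversible flow has a self-adjoint exact discrete Lagrangian, together with the Corollary establishing that $H_d^{+,E}$ is self-adjoint, these leading terms contribute a self-adjoint piece. The hypothesis that the flow associated with $L^{(A)}$ is time-reversible is exactly what is needed here, and it is the reason it appears in the statement.

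The substance of the argument, then, is to check that the averaging correction respects the adjoint symmetry. First I would record that time-reversibility of the $L^{(A)}$-flow means the averaged trajectory satisfies $q_A(q_1,q_0,-t) = q_A(q_0,q_1,t-h)$ (reversing both the boundary data and the time direction traces out the same configuration path backwards), so that under the substitution $t\mapsto h-t$ the integral $\int_0^h V^{(B)}(q_A(q_0,q_1,t))\,dt$ is invariant. For the Lagrangian correction this invariance, combined with the sign flip in $-L_d(q_1,q_0,-h)$ and the sign flip $h\mapsto -h$ in the reversed integration limits, yields $L_d^*=L_d$ after the change of variables. For the Hamiltonian correction I would carry out the analogous computation modeled on the proof that $H_d^{+,E}$ is self-adjoint: apply $(H_d^+)^*(p_0,q_1;h)=-H_d^+(q_1,p_0;-h)$, expand both the exact piece and the $\epsilon$-integral, use the time-reversal relation for $q_A$ together with the Legendre-transform relation between $H_d^{(A),+,E}$ and $H_d^{(A),-,E}$ from the self-adjointness corollary, and confirm the result equals the associated discrete left Hamiltonian.

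I expect the main obstacle to be pinning down the correct time-reversal transformation law for the averaging integral in the Hamiltonian case, where the correction $\epsilon\int_0^h V^{(B)}(q_A(q_0,p_{1,H_d^+},t))\,dt$ is parametrized by the \emph{mixed} boundary data $(q_0,p_{1,H_d^+})$ rather than by $(q_0,q_1)$. Establishing that reversing time and swapping the roles of initial and terminal data sends this mixed-data averaged trajectory to the one with the adjoint's boundary data, so that the integral is preserved, is the delicate step; once that transformation law is in hand, the sign bookkeeping matches the pattern already established in the proof of self-adjointness of $H_d^{+,E}$, and the conclusion follows from the Corollary and its Lagrangian counterpart.
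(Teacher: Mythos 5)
Your proposal matches the paper's proof essentially step for step: both verify self-adjointness of the generating functions by direct computation of $L_d^* = L_d$ and $(H_d^+)^* = H_d^-$, splitting each into the exact-$L^{(A)}$ (resp.\ exact-$H^{(A)}$) piece, handled by self-adjointness of the exact discrete Lagrangian/right Hamiltonian, plus the $\epsilon$-averaging integral, handled by the time-reversibility of the $L^{(A)}$ flow, and then conclude symmetry via the corollary. The ``delicate step'' you flag for the mixed boundary data $(q_0,p_1)$ is indeed present in the paper, where it is absorbed into the fourth equality of the Hamiltonian computation with exactly the transformation law you describe.
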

\begin{proof}
The discrete Lagrangian associated with \eqref{Leqtns} is given by,
\begin{align*}
L_d(q_0,q_1;h) = L_d^{(A),E}(q_0,q_1;h) - \epsilon \int_0^h V^{(B)}(q^A(q_0,q_1,t)) dt.
\end{align*}
The adjoint of the discrete Lagrangian is given by,
\begin{align*}
(L_d(q_0,q_1;h))^* &= -L_d(q_1,q_0;-h) \\
&= -L_d^{(A),E}(q_1,q_0;-h) + \epsilon \int_0^{-h} V^{(B)}(q^A(q_1,q_0,t)) dt \\
&= -L_d^{(A),E}(q_1,q_0;-h) - \epsilon \int_0^h V^{(B)}(q^A(q_1,q_0,t)) dt \\
&= L_d^{(A),E}(q_0,q_1;h) - \epsilon \int_0^h V^{(B)}(q^A(q_0,q_1,t)) dt \\
&= L_d(q_0,q_1;h).
\end{align*}
The third equality comes from the time-reversibility of the flow associated with $L^{(A)}$, and the fourth equality uses that property together with the fact that the exact discrete Lagrangian is self-adjoint. \\
The discrete right Hamiltonian associated with \eqref{Heqtns} is given by,
\begin{align*}
H_d^+(q_0,p_1;h) = H_d^{(A),+,E}(q_0,p_1;h) + \epsilon \int_0^h V^{(B)}(q_A(q_0,p_1,t)) dt.
\end{align*}
The adjoint of the discrete right Hamiltonian is given by,
\begin{align*}
(H_d^+)^*(p_0,q_1;h) &= -H_d^+(q_1,p_0;-h) \\
&= -H_d^{(A),+,E}(q_1,p_0;-h) - \epsilon \int_0^{-h} V^{(B)}(q_A(q_1,p_0,t)) dt \\
&= -H_d^{(A),+,E}(q_1,p_0;-h) + \epsilon \int_0^h V^{(B)}(q_A(q_1,p_0,t)) dt \\
&= H_d^{(A),-,E}(p_0,q_1;h) + \epsilon \int_0^h V^{(B)}(q_A(p_0,q_1,t)) dt \\
&= H_d^-(p_0,q_1;h),
\end{align*}
where the third equality comes from the time-reversibility of the flow associated with $H^{(A)}$, and the fourth equality uses that property together with the fact that the exact discrete Hamiltonian is self-adjoint.
\end{proof}

How do these respective maps differ? To gain insight into this question we now turn to numerical experimentation.
\subsection{Numerical Results}
Consider a Hamiltonian of the form,
\begin{equation}
H(q,p) = \frac{1}{2}(p^2+q^2) + \frac{\epsilon}{3} q^3,
\end{equation}
which is the Hamiltonian for a nonlinearly perturbed harmonic oscillator. The corresponding averaged Lagrangian is given by
\begin{equation}
L_d(q_0,q_1,h) = \int_0^h \frac{1}{2}(\dot{q}_A(t)^2 - q_A(t)^2) dt - \int_0^h \frac{\epsilon}{3} q_A(t)^3 dt,
\end{equation}
where $(q_A(t),\dot{q}_A(t))$ is the solution corresponding to the Lagrangian $L^{(A)}(q,\dot{q}) = \frac{1}{2}(\dot{q}^2 - q^2)$ with boundary conditions $(q_0,q_1)$. Analogously, the averaged Hamiltonian is given by
\begin{equation}
H_d^+(q_0,p_1,h) = p_1^Tq_A(h) - \int_0^h \frac{1}{2}(p_A(t)^2 - q_A(t)^2) dt + \frac{\epsilon}{3} \int_0^h q_A(t)^3 dt,
\end{equation}
where $(q_A(t),p_A(t))$ is the solution corresponding to the Hamiltonian $H^{(A)}(q,p)$ with boundary conditions $(q_0,p_1)$.
Applying the discrete right and left Legendre transforms implicitly defines the discrete Hamiltonian map for $L_d(q_0,q_1,h)$ and  the discrete right Hamiltonian map for $H_d^+(q_0,p_1,h)$, which yields the respective one-step methods. Numerical simulations were run over a time-span from 0 to 10000 or the nearest integer value to 10000 for the respective time-step. The initial conditions are given by $(q_0,p_0) = (1,0)$.
\\
\begin{figure}
  \includegraphics[width=0.9\textwidth]{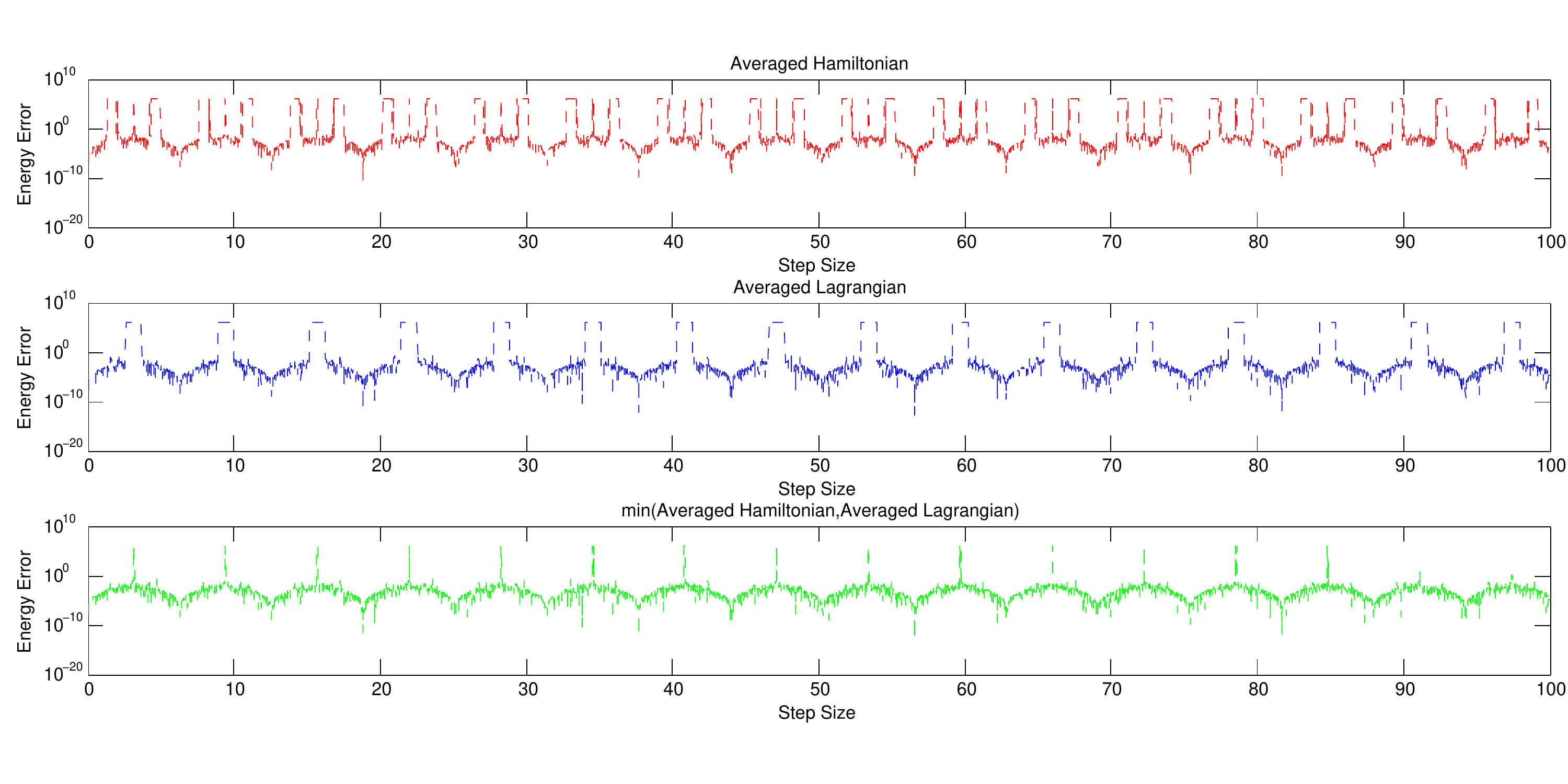}
  \caption
   {Three plots of step size versus energy error with fixed $\epsilon = 0.1$. The first plot corresponds to the averaged Hamiltonian, and it suffers from numerical resonance around odd integer multiples of $\frac{\pi}{2}$ and exactly at odd multiples $\pi$. The second plot corresponds to the averaged Lagrangian which suffers from numerical resonance around odd multiples of $\pi$. The last plot takes the minimum error of the respective methods.}
   \label{Avg3Plot}
\end{figure}
\begin{figure}
  \includegraphics[width=0.9\textwidth]{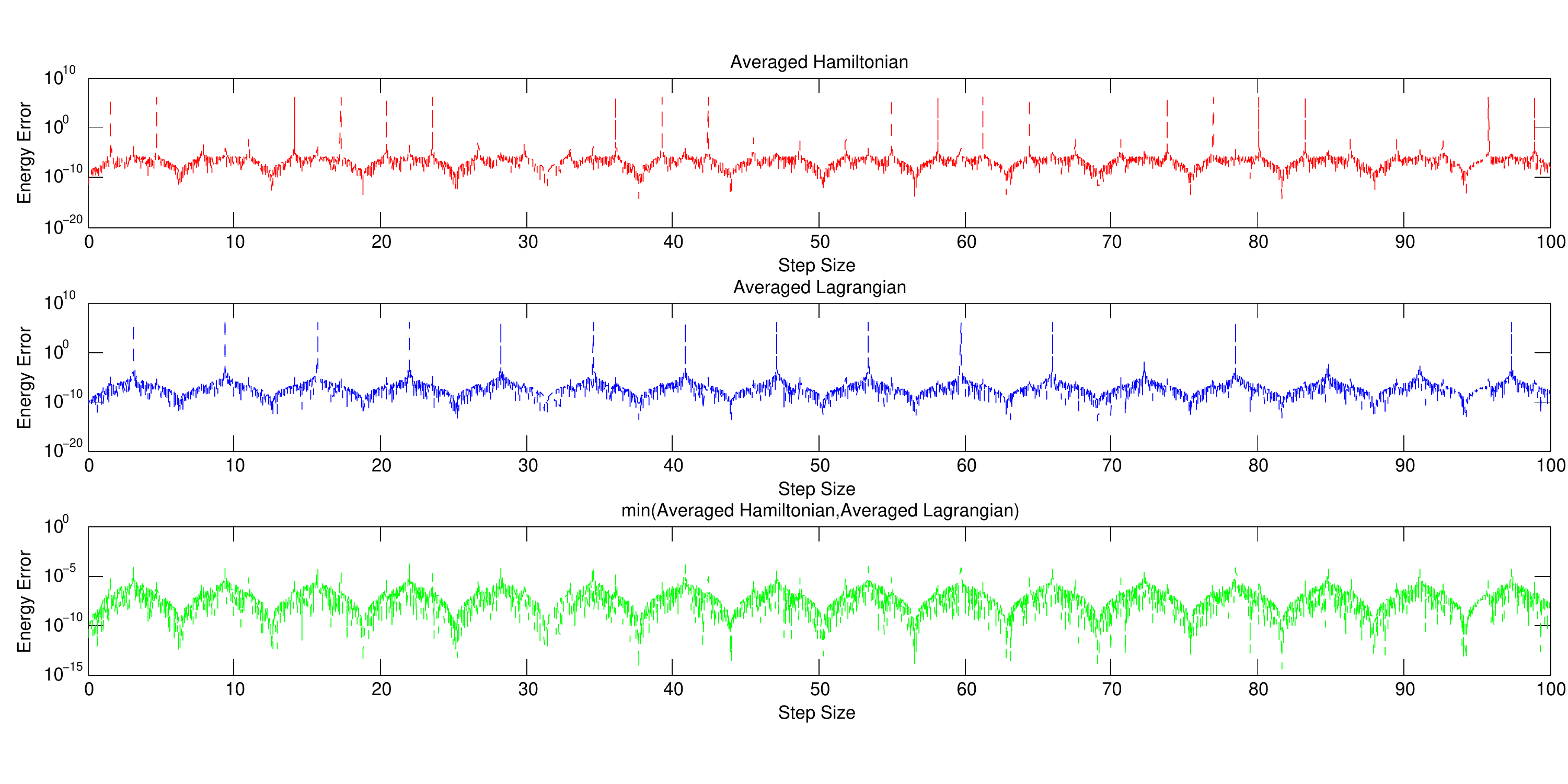}
  \caption 
   {Three plots of step size versus energy error with fixed $\epsilon = 0.001$. The first plot corresponds to the averaged Hamiltonian, and it suffers from numerical resonance at some odd integer multiples of $\frac{\pi}{2}$. The second plot corresponds to the averaged Lagrangian which suffers from numerical resonance around odd multiples of $\pi$. The last plot takes the minimum error of the respective methods.}
   \label{Avg3Plot2}
\end{figure}

Figures \ref{Avg3Plot} and \ref{Avg3Plot2} show plots of the energy error versus step size for two different values of $\epsilon$. Both figures demonstrate that the discrete Lagrangian and discrete right Hamiltonian have numerical resonance issues that are in some sense dual. The discrete Lagrangian exhibits excessive numerical resonance for step sizes near odd multiples of $\pi$, while the discrete right Hamiltonian exhibits excessive numerical resonance for step sizes near odd multiples of $\frac{\pi}{2}$. It should be noted that the arbitrary value of $10^6$ was substituted for output that was either near infinite or NaN. What is particularly striking is that the occurence of the numerical resonance is intimately connected to the corresponding boundary-values for each generating function. 
\\

To make the previous statement precise let us examine the unperturbed model. Consider the unperturbed harmonic oscillator boundary-value problem,
\begin{equation}
\ddot{q}(t) + q(t) = 0, \qquad q(0)=q_0, \ q(h)=q_1.
\end{equation}
Analytically, the boundary-value problem is not well-posed when $h$ is an integer multiple of $\pi$. Introducing round-off error into the picture only increases the region of instability around integer multiples of $\pi$. The energy error plot of the averaged Lagrangian (see Figures \ref{Avg3Plot} and \ref{Avg3Plot2}) for the perturbed harmonic oscillator exhibits excessive round-off error around similar values of $h$. Recall that the exact discrete Lagrangian is given by,
\begin{equation}
L_d^E(q_0,q_1;h)=\int_0^h L(q_{01}(t),\dot q_{01}(t)) dt,\label{exact_Ld_Jacobi}
\end{equation}
where $q_{01}(0)=q_0$, $q_{01}(h)=q_1$, and $q_{01}(t)$ satisfies the Euler--Lagrange equation in the time interval $(0,h)$. Thus, it is ultimately defined in terms of such a boundary-value problem. The integrator obtained from the exact discrete Lagrangian is given by,
\begin{align*}
q_1 &= q_0\cos(h) + p_0\sin(h), \\
p_1 &= q_1\cot(h) - q_0\csc(h).
\end{align*}
Noting that $\cot(h)$ and $\csc(h)$ both involve dividing by $\sin(h)$, we expect increased round-off error around values of $h$ that are integer multiples of $\pi$.

Similarly, the exact discrete right Hamiltonian is given by,
\begin{equation}
H_d^{+,E}(q_0,p_1;h)=p_1^Tq_1 - \int_0^h [p_{01}(t)^T\dot{q}_{01}(t) - H(q_{01}(t),p_{01}(t))] dt,
\end{equation}
where $q_{01}(0)=q_0$, $p_{01}(h)=p_1$, and $(q_{01}(t),p_{01}(t))$ satisfies Hamilton's equations in the time interval $(0,h)$. This is related to the unperturbed harmonic oscillator boundary-value problem given by,
\begin{equation}
\dot{q}(t)=p(t), \qquad \dot{p}(t)=-q(t), \qquad q(0)=q_0, \ p(h)=p_1.
\end{equation}
This boundary-value problem is not well-posed for values of $h$ that are odd multiples of $\frac{\pi}{2}$. The energy error plot of the averaged Hamiltonian for the perturbed harmonic oscillator also exhibits round-off error around these values of $h$. The integrator obtained from the exact discrete right Hamiltonian for the unperturbed harmonic oscillator is given by,
\begin{align*}
p_1 &= p_0\cos(h) - q_0\sin(h), \\
q_1 &= p_1\tan(h) + q_0\sec(h).
\end{align*}
Noting that the method involves $\tan(h)$ and $\sec(h)$, we expect increased round-off error around odd multiples of $\frac{\pi}{2}$.
\\

Both of the integrators given by the exact discrete Lagrangian and the exact discrete right Hamiltonian have been implemented for the harmonic oscillator with initial conditions $(q_0,p_1) = (1,0)$ over the time interval $[0,10000]$, and the energy error is shown in Figure \ref{AvgExact}. Note the jump in round-off error corresponding to values of $h$ that are odd multiples of $\pi$ (for the discrete Lagrangian) and odd multiples of $\frac{\pi}{2}$ (for the discrete right Hamiltonian).
\\

\begin{figure}
  \includegraphics[width=0.9\textwidth]{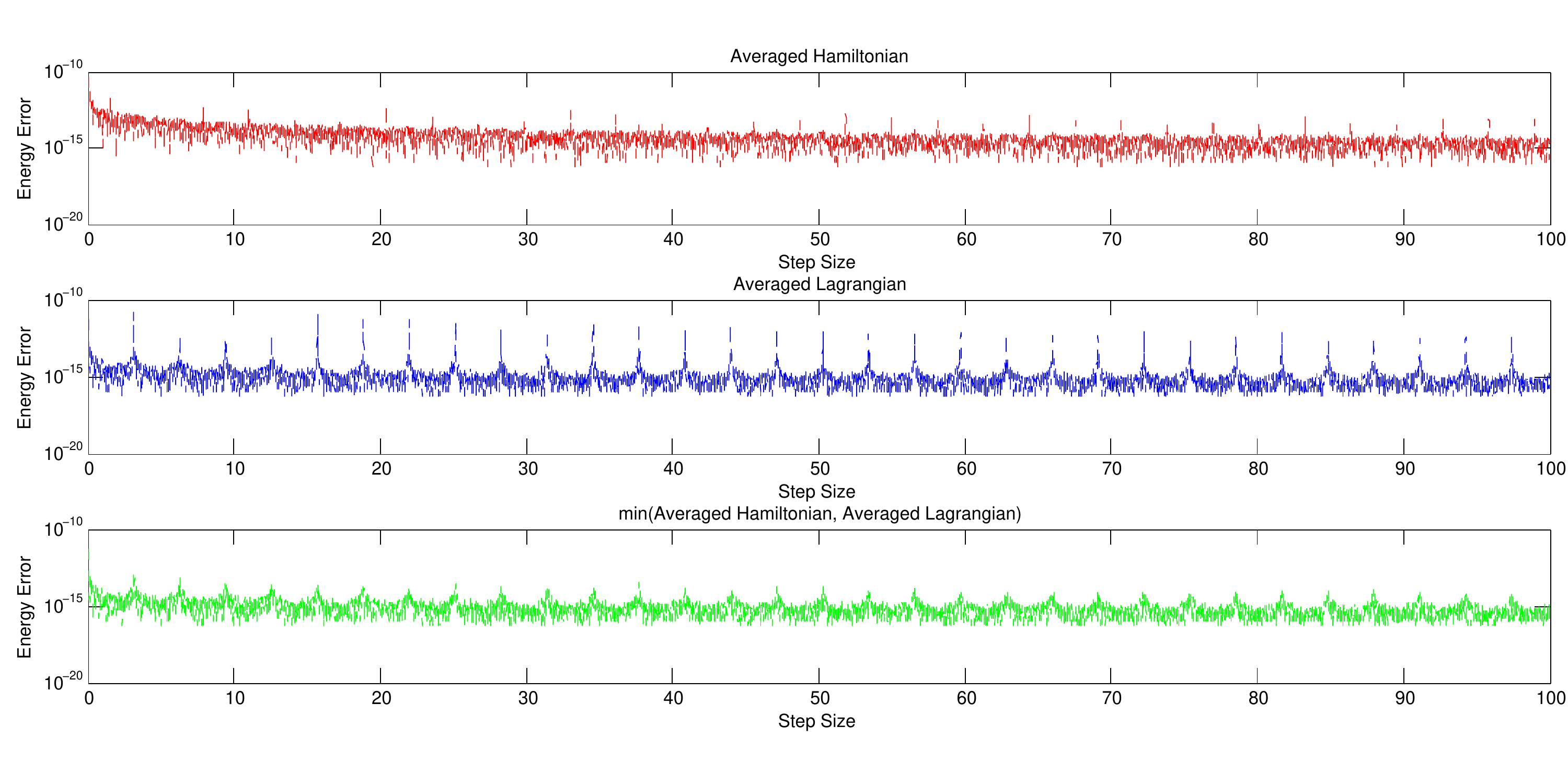}
  \caption
   {The first plot is the energy error versus step size for the exact discrete right Hamiltonian applied to the harmonic oscillator. The second plot shows the energy error versus step size for the exact discrete Lagrangian, while the third plot takes the minimum of the energy error from either method.}
   \label{AvgExact}
\end{figure}

Thus, in this particular case, we can conclude that the difference between the symplectic maps generated by the respective discrete Lagrangian and discrete Hamiltonian is a matter of numerical conditioning, which is inherited from the underlying ill-posedness of the associated boundary-value problem.
\\

Now this by no means provides a rigorous analysis of the numerical resonances, nor does it fully explain all of the resonance effects, but it does provide motivation and insight into the numerical differences between the discrete Lagrangian and discrete right Hamiltonian. A more in-depth analysis might be provided by applying something similar to modulated Fourier expansions (see \cite{HaLu2001,HaLu2012}, and Chapter XIII of \cite{HaLuWa2006}). Modulated Fourier expansions are particularly well-suited for oscillatory problems when large step sizes are sought. The standard backward error analysis relies on $h\omega \to 0$, which is not the case for high oscillatory problems when seeking large step sizes. Modulated Fourier expansions can provide a tool for deriving many of the same results as backward error analysis, such as long-term energy preservation. Furthermore, it can be quite useful for examining the step sizes that lead to excessive numerical resonance. However, it should be noted that while modulated Fourier expansions have been used quite successfully to analyze explicit trigonometric integrators, it is not quite as clear how easily it can deal with implicit integrators such as those obtained from the discrete averaged Lagrangian and discrete averaged Hamiltonian.

\subsection{Fermi--Pasta--Ulam Simulation}
The previous section showed important differences between discrete Lagrangians and discrete Hamiltonians when applied to harmonic oscillator problems. This difference could be interpreted as being related to the conditioning of the respective boundary-value problem for the Lagrangian and Hamiltonian. Is this difference visible in highly oscillatory phenomenon? For this we turn to the Fermi--Pasta--Ulam (FPU) problem (see \cite{FPU55,HaLuWa2006}). This is a model of mass points connected together by an alternating series of stiff harmonic and soft nonlinear springs, where the first and last mass points are held fixed. Denoting the displacement of the mass points by $q_1,\dots, q_{2m}$ and the velocites $\dot{q}_i=p_i$, then the associated Hamiltonian is,
\begin{align*}
H(p,q)=\frac{1}{2}\sum_{i=1}^m(p_{2i-1}^2+p_{2i}^2)+\frac{\omega^2}{4}\sum_{i=1}^m(q_{2i}-q_{2i-1})^2+\sum_{i=0}^m(q_{2i+1}-q_{2i})^4.
\end{align*}
Under an appropriate change of variables, the total oscillatory energy of the stiff springs is nearly constant. For our simulation $\omega=50$ and $m=3$, so there will be 3 stiff springs whose sum of oscillatory energy should remain close to constant. Figure \ref{FPU} is a plot of the oscillatory energies of the stiff springs approximated by the various numerical integrators.
\\

The following simulations used a Lagrangian Taylor variational integrator and a  Hamiltonian Taylor variational integrator. Both were constructed using the trapezoid quadrature rule and a zeroth-order Taylor method. The Lagrangian construction resulted in the method,
\begin{align*}
q_1 &= q_0 +hM^{-1}p_0-\frac{h^2}{2}M^{-1}\nabla V(q_0), \\
p_1 &= p_0 - \frac{h}{2}[\nabla V(q_0) + \nabla V(q_1)],
\end{align*}
which is better known as the St\"ormer--Verlet method. The Hamiltonian construction resulted in the method,
\begin{align*}
q_1 &= q_0 +hM^{-1}p_0-\frac{h^2}{2}M^{-1}\nabla V(q_0), \\
p_1 &= p_0 - \frac{h}{2}[\nabla V(q_0) + \nabla V(q_0+hM^{-1}p_1)],
\end{align*}
which is not St\"ormer--Verlet. This method is in fact implicit, while St\"ormer--Verlet is explicit and symmetric. In addition, the implicit-explicit method (IMEX) was used, as it has been shown in \cite{McStern2014} to be optimal, in a certain sense, among all modified trigonometric integrators for highly oscillatory problem such as the FPU model. This numerical method essentially mixes the midpoint method for the fast, linear part and the St\"ormer--Verlet method for the slow, nonlinear part.

\begin{figure}
  \includegraphics[width=0.9\textwidth]{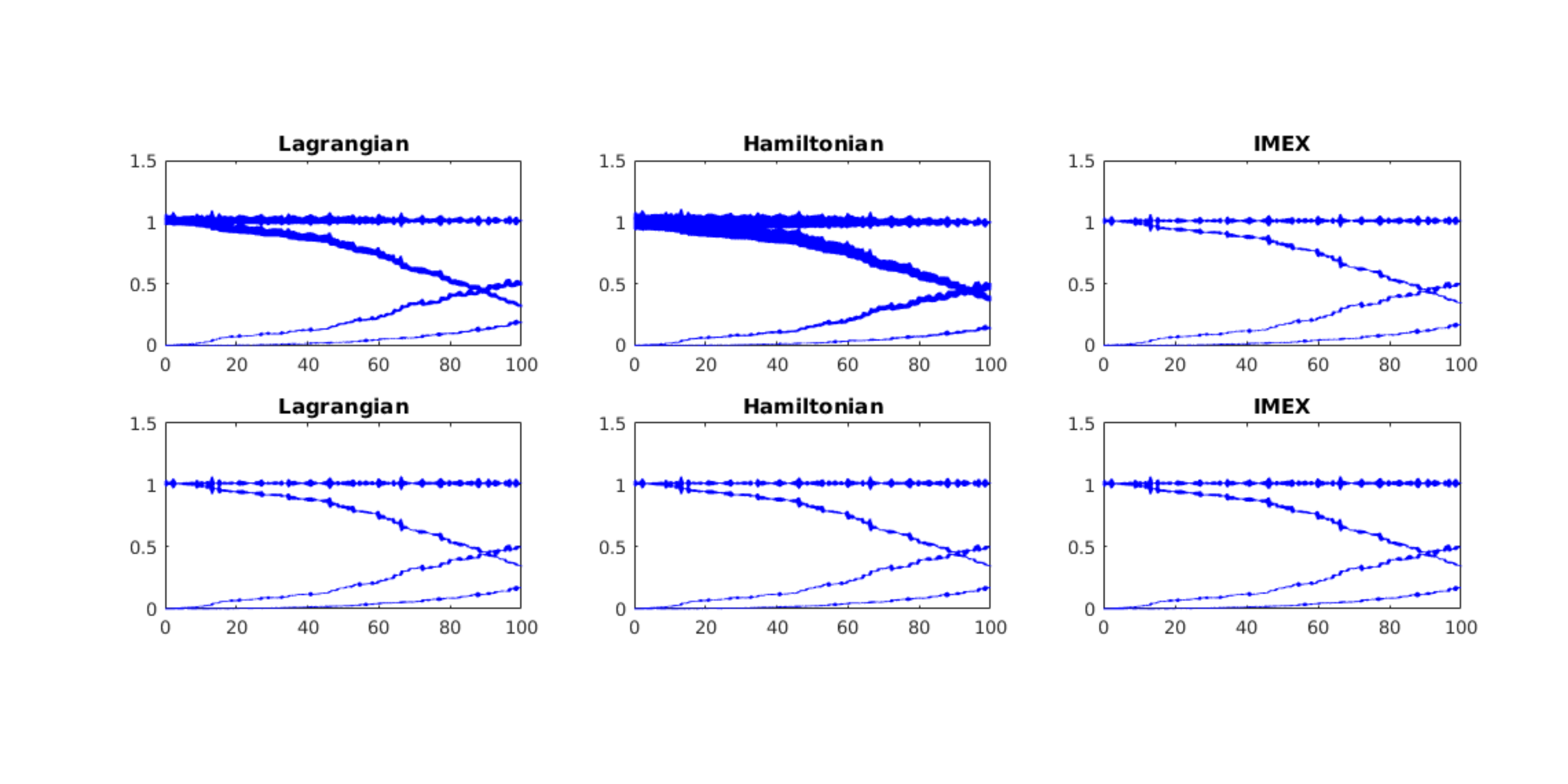}
  \caption
   {The first row of plots involved a step size of $h=0.01$, while the bottom row used $h=0.001$. The Lagrangian derived integrator can be seen to outperform the Hamiltonian derived integrator. The IMEX method performs the best among the three methods.}
   \label{FPU}
\end{figure}

The plots clearly show that the Lagrangian method (St\"ormer--Verlet) outperformed the Hamiltonian method. Even though the Hamiltonian method was implicit, the symmetry of the St\"ormer--Verlet method may be the more important property for highly-oscillatory problems (see \cite{HaLuWa2006}). Variational integrators derived from an approximation scheme that involve a one-step method, applied to the boundary-value problem formulation of the Lagrangian or Hamiltonian, are only likely to be symmetric when derived from a Lagrangian formulation. This can be seen to come from the independent variables associated with a Type I, II, or III generating function. The boundary values $(q_0,q_1)$ lend themselves to symmetry more readily than $(q_0,p_1)$ or $(q_1,p_0)$. That being said, other approximation schemes, such as the averaging methods of the previous section, can generate symmetric integrators using either formulation.

\section{Conclusion}
Error analysis and symmetry results have now been extended to cover discrete Hamiltonian variational integrators. Furthermore, many examples have been presented indicating that the properties of variational integrators are dependent on both the approximation scheme used in constructing the generating function and the type of generating function being approximated.

This paper indicates that the class of variational integrators generated using the Hamiltonian formulation are not necessarily equivalent to the ones obtained from the Lagrangian formulation, and it would therefore be of interest to continue developing methods based on the discrete Hamiltonian variational integrator formulation. In particular, the results presented suggest that further work remains to be done to better understand the circumstances under which it is preferable to favor one approach over the other.

\section*{Acknowledgements}
This research has been supported in part by NSF under grants DMS-1010687, CMMI-1029445, DMS-1065972, CMMI-1334759, DMS-1411792, DMS-1345013.

\bibliography{vi_review}
\bibliographystyle{plainnat}
\end{document}